\newtheorem{thm}{Theorem}
\newtheorem{cor}[thm]{Corollary}
\newtheorem{prop}[thm]{Proposition}
\newtheorem{rem}[thm]{Remark}
\def\eps{\varepsilon}
\newcommand{\R}{{\mathbb R}}
\def\eps{\varepsilon}
\begin{document}

\title[Cloaking via mapping for the heat equation]{Cloaking via mapping for the heat equation}

\bibliographystyle{alpha}

\author{R. V. Craster}
\address{R.V.C.: Department of Mathematics, Imperial College London, London, SW7 2AZ, United Kingdom.}
\email{r.craster@imperial.ac.uk}

\author{S. R. L. Guenneau}
\address{S.R.L.G.: Aix-Marseille Universite, CNRS, Centrale Marseille, Institut Fresnel, Avenue Escadrille Normandie-Ni\'emen, 13013, Marseille, France.}
\email{sebastien.guenneau@fresnel.fr}

\author{H. R. Hutridurga}
\address{H.R.H.: Department of Mathematics, Imperial College London, London, SW7 2AZ, United Kingdom.}
\email{h.hutridurga-ramaiah@imperial.ac.uk}

\author{G. A. Pavliotis}
\address{G.A.P.: Department of Mathematics, Imperial College London, London, SW7 2AZ, United Kingdom.}
\email{g.pavliotis@imperial.ac.uk}

\date{\today}

\maketitle

\setcounter{tocdepth}{1}
\tableofcontents

\thispagestyle{empty}

\begin{abstract}
This paper explores the concept of near-cloaking in the context of time-dependent heat propagation. We show that after the lapse of a certain threshold time instance, the boundary measurements for the homogeneous heat equation are close to the cloaked heat problem in a certain Sobolev space norm irrespective of the density-conductivity pair in the cloaked region. A regularised transformation media theory is employed to arrive at our results. Our proof relies on the study of the long time behaviour of solutions to the parabolic problems with high contrast in density and conductivity coefficients. It further relies on the study of boundary measurement estimates in the presence of small defects in the context of steady conduction problem. We then present some numerical examples to illustrate our theoretical results.
\end{abstract}

\section{Introduction}

\subsection{Physical motivation}
This work addresses the concept of near-cloaking in the context of the
time-dependent heat propagation. Our study is motivated by experiments
in electrostatics \cite{Liu_2012} and thermodynamics
\cite{Schittny_2013}, which have demonstrated the markedly different
behaviour of structured cloaks in static and dynamic regimes. It has
been observed, in the physics literature, that the thermal field in
\cite{Schittny_2013} reaches an equilibrium state after a certain time
interval, when it looks nearly identical to electrostatic field
measured in \cite{Liu_2012}. There have already been some rigorous results for the electrostatic case
\cite{Greenleaf_2003}. However, during the transient regime the
thermal field looks much different from the static result, and a
natural question that arises is whether one can give a mathematically
rigorous definition of cloaking for diffusion processes in the time
domain. This has important practical applications as cloaking for
diffusion processes have been applied to mass transport problems in
life sciences \cite{Guenneau_2013} and chemistry \cite{Zeng_2013} as
well as to multiple light scattering whereby light is governed by
ballistic laws \cite{Schittny_2014}. Interestingly, the control of wave
trajectories first proposed in the context of electromagnetism
\cite{pendry_2006} can also be extended to matter waves that are
solutions of a Schr\" odinger equation \cite{Zhang_2008} that is akin to the
heat equation and so the near-cloaking we investigate has broad
implications and application. 

\subsection{Analytical motivation}
Change-of-variables based cloaking schemes have been inspired by the work of Greenleaf and co-authors \cite{Greenleaf_2003} in the context of electric impedance tomography and by the work of Pendry and co-authors \cite{pendry_2006} in the context of time-harmonic Maxwell's equations. The transformation employed in both those works is singular and any mathematical analysis involving them becomes quite involved. The transformation in \cite{Greenleaf_2003, pendry_2006} essentially blows up a point to a region in space which needs to be cloaked. These works yield perfect-cloaks, i.e., they render the target region completely invisible to boundary measurements. Regularised versions of this singular approach have been proposed in the literature. In \cite{Kohn_2008}, Kohn and co-authors proposed a regularised approximation of this map by blowing up a small ball to the cloaked region and studied the asymptotic behaviour as the radius of the small ball vanishes, thus recovering the singular transform of \cite{Greenleaf_2003, pendry_2006}. An alternate approach involving the truncation of singularities was employed by Greenleaf and co-authors in \cite{Greenleaf_2008} to provide an approximation scheme for the singular transform in \cite{Greenleaf_2003, pendry_2006}. It is to be noted that the constructions in \cite{Kohn_2008} and \cite{Greenleaf_2008} are shown to be equivalent in \cite{Kocyigit_2013}. We refer the interested reader to the review papers \cite{Greenleaf_2009A, Greenleaf_2009B} for further details on cloaking via change of variables approach with emphasis on the aforementioned singular transform.

Rather than employing the singular transformation, we follow the lead of Kohn and co-authors \cite{Kohn_2008}. This is in contrast with some works in the literature on the time-dependent thermal cloaking strategies where singular schemes are used e.g., \cite{Guenneau_2012, Petiteau_2014, Petiteau_2015}. Note that the evolution equation which we consider is a good model for \cite{Schittny_2013}, which designs and fabricates a microstructured thermal cloak that molds the heat flow around an object in a metal plate. We refer the interested reader to the review paper \cite{Raza_2016} for further details on transformation thermodynamics. The work of Kohn and co-authors \cite{Kohn_2008} estimates that the near-cloaking they propose for the steady conduction problem is $\eps^d$-close to the perfect cloak, where $d$ is the space dimension. Our construction of the cloaking structure is exactly similar to the construction in \cite{Kohn_2008}. In the present time-dependent setting, we allow the solution to evolve in time until it gets close to the associated thermal equilibrium state which solves a steady conduction problem. This closeness is studied in the Sobolev space $\mathrm H^1(\Omega)$. We then employ the $\eps^d$-closeness result of \cite{Kohn_2008} to deduce our near-cloaking theorem in the time-dependent setting. To the best of our knowledge, this is the first work to consider near-cloaking strategies to address time-dependent heat conduction problem.

In the literature, there have been numerous works on the approximate cloaking strategies for the Helmoltz equation \cite{Kohn_2010, Nguyen_2010, Nguyen_2011, Nguyen_2012} (see also \cite{Nguyen-Vogelius_2012} for the treatment of the full wave equation). The strategy in \cite{Nguyen-Vogelius_2012} to treat the time-dependent wave problem is to take the Fourier transform in the time variable. This yields a family of Helmholtz problems (family indexed by the frequency). The essential idea there is to obtain appropriate degree of invisibility estimates for the Helmholtz equation -- the estimates being frequency-dependent. More importantly, these estimates blow-up in the frequency regime. But, they do so in an integrable fashion. Equipped with these approximate cloaking results for the Helmholtz equations, the authors in \cite{Nguyen-Vogelius_2012} simply invert the Fourier transform to read off the near-cloaking result for the time-dependent wave equation. Inspired by \cite{Nguyen-Vogelius_2012}, one could apply the Laplace transform in the time variable for the heat equation and try to mimic the analysis in \cite{Nguyen-Vogelius_2012} for the family of elliptic problems thus obtained. Note that this approach does not require, unlike ours, the solution to the heat conduction problem to reach equilibrium state to obtain approximate cloaking. We have not explored this approach in detail and we leave it for future investigations. 

Gralak and co-authors \cite{Gralak_2016} recently developed invisible layered structures in transformation optics in the temporal regime. Inspired by that work, we have also developed a transformation media theory for thermal layered cloaks, which are of practical importance in thin-film solar cells for energy harvesting in photovoltaic industry. These layered cloaks might be of importance in thermal imaging. We direct the interested readers to \cite{Ammari_2005} and references therein.

In the applied mathematics community working on meta-materials, enhancement of near-cloaking is another topic which has been addressed in the literature. Loosely speaking, these enhancement techniques involve covering a small ball of radius $\eps$ by multiple coatings and then applying the push-forward maps of \cite{Kohn_2008}. These multiple coatings which result in the vanishing of certain polarization tensors help us improve the $\eps^d$-closeness of \cite{Kohn_2008} to $\eps^{dN}$-closeness where $N$ denotes the number of coatings in the above construction. For further details, we direct the readers to \cite{Ammari_2012a, Ammari_2012b} in the mathematics literature and \cite{Alu_2005, Alu_2007} in the physics literature (the works \cite{Alu_2005, Alu_2007} employ negative index materials). One could employ the constructions of \cite{Ammari_2012a, Ammari_2012b} in the time-independent setting to our temporal setting to obtain enhanced near-cloaking structures. This again is left for future investigations.

In this present work, we are able to treat time-independent sources for the heat equation. As our approach involves the study of thermal equilibration, we could extend our result to time-dependent sources which result in equilibration. This, however, leaves open the question of near-cloaking for the heat equation with genuinely time-dependent sources which do not result in thermal equilibration. For example, sources which are time harmonic cannot be treated by the approach of this paper. The approach involving Laplace transform mentioned in a previous paragraph might be of help here. There are plenty of numerical works published by physicists on these aspects, but with no mathematical foundation thus far. The authors plan to return to these questions in the near future.

\subsection{Paper structure} The paper is organized as follows. In section \ref{sec:math-set}, we briefly recall the change-of-variable-principle for the heat equation. This section also makes precise the notion of near-cloaking and its connection to perfect cloaking followed by the construction of cloaking density and conductivity coefficients. Our main result (Theorem \ref{thm:near-cloak}) is stated in that section. Section \ref{sec:spec} deals with the long time behaviour of solutions to parabolic problems; the effect of high contrast in density and conduction on the long time behaviour of solutions is also treated in that section. The proof of Theorem \ref{thm:near-cloak} is given in section \ref{sec:near-cloak-proof}. In this section, we also develop upon an idea of layered cloak inspired by the construction in \cite{Gralak_2016}. Finally, in section \ref{sec:numerics}, we present some numerical examples to illustrate our theoretical results.

\section{Mathematical setting}\label{sec:math-set}
Let $\Omega\subset\R^d$ $(d=2,3)$ be a smooth bounded domain such that $B_2\subset \Omega$. Throughout, we use the notation $B_r$ to denote an euclidean ball of radius $r$ centred at the origin. 
\subsection{Change-of-variable principle}
The following result recalls the principle behind the change-of-variables based cloaking strategies. This is the typical and essential ingredient of any cloaking strategies in transformation media theory.
\begin{prop}\label{prop:change-variable-princ}
Let the coefficients $A\in\mathrm L^\infty(\Omega;\R^{d\times d})$ and $\rho\in\mathrm L^\infty(\Omega;\R)$. Suppose the source term $f\in\mathrm L^2(\Omega;\R)$. Consider a smooth invertible map $\mathbb{F}:\Omega\mapsto\Omega$ such that $\mathbb{F}(x) = x$ for each $x\in \Omega\setminus B_2$. Furthermore, assume that the associated Jacobians satisfy ${\rm det}(D\mathbb{F})(x), {\rm det}(D\mathbb{F}^{-1})(x) \ge C >0$ for a.e. $x\in\Omega$. Then $u(t,x)$ is a solution to 
\begin{align*}
\rho(x) \frac{\partial u}{\partial t} = \nabla \cdot \Big( A(x) \nabla u \Big) + f(x) \qquad \mbox{ for }(t,x)\in(0,\infty)\times\Omega
\end{align*}
if and only if $v= u\circ \mathbb{F}^{-1}$ is a solution to
\begin{align*}
\mathbb{F}^*\rho(y) \frac{\partial v}{\partial t} = \nabla \cdot \Big( \mathbb{F}^*A(y) \nabla v \Big) + \mathbb{F}^*f(y) \qquad \mbox{ for }(t,y)\in(0,\infty)\times\Omega
\end{align*}
where the coefficients are given as
\begin{equation}\label{eq:push-forward-formulae}
\begin{aligned}
& \mathbb{F}^*\rho(y) = \frac{\rho(x)}{{\rm det }(D\mathbb{F})(x)};
\qquad \qquad \qquad \qquad
& \mathbb{F}^*f(t,y) = \frac{f(t,x)}{{\rm det }(D\mathbb{F})(x)};
\\[0.2 cm]
& \mathbb{F}^*A(y) = \frac{D\mathbb{F}(x) A(x) D\mathbb{F}^\top(x)}{{\rm det }(D\mathbb{F})(x)}
& ~ 
\end{aligned}
\end{equation}
with the understanding that the right hand sides in \eqref{eq:push-forward-formulae} are computed at $x=\mathbb{F}^{-1}(y)$. Moreover we have for all $t>0$,
\begin{align}\label{eq:prop:change-variable-assertion}
u(t,\cdot) = v(t,\cdot) \qquad \mbox{ in }\Omega\setminus B_2.
\end{align}
\end{prop}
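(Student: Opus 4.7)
The plan is to establish the equivalence by a direct change of variables in the weak formulation, following the standard transformation-media bookkeeping. First I would dispose of \eqref{eq:prop:change-variable-assertion}: since $\mathbb{F}$ is the identity on $\Omega\setminus B_2$, so is $\mathbb{F}^{-1}$, hence $v(t,y)=u(t,\mathbb{F}^{-1}(y))=u(t,y)$ on that set. This identity also makes the proposed bijection of test-function spaces behave well near $\partial\Omega$.

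For the forward implication, assume $u$ solves the first equation in the weak sense. For an arbitrary smooth test function $\psi=\psi(t,y)$, set $\phi(t,x)=\psi(t,\mathbb{F}(x))$ and insert $\phi$ into the weak form of the $u$-equation. The chain rule gives $\nabla_x\phi(t,x)=(D\mathbb{F})^\top(x)\,(\nabla_y\psi)(t,\mathbb{F}(x))$ and, from $u=v\circ\mathbb{F}$, also $\nabla_x u(t,x)=(D\mathbb{F})^\top(x)\,(\nabla_y v)(t,\mathbb{F}(x))$. Consequently the spatial bilinear form transforms as
\begin{equation*}
A(x)\,\nabla_x u\cdot\nabla_x\phi \;=\; \bigl(D\mathbb{F}(x)\,A(x)\,D\mathbb{F}^\top(x)\bigr)\,(\nabla_y v)(t,\mathbb{F}(x))\cdot(\nabla_y\psi)(t,\mathbb{F}(x)).
\end{equation*}
Performing the substitution $y=\mathbb{F}(x)$, which carries a Jacobian factor $1/\det(D\mathbb{F})(x)$ evaluated at $x=\mathbb{F}^{-1}(y)$, this spatial integral produces precisely the pushed-forward coefficient $\mathbb{F}^*A$ of \eqref{eq:push-forward-formulae}. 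The time-derivative and source integrals are handled in the same way and yield $\mathbb{F}^*\rho$ and $\mathbb{F}^*f$. As $\psi$ was arbitrary, $v$ is a weak solution of the transformed equation.

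The reverse implication runs the same computation backwards using the bijection $\phi\mapsto\psi=\phi\circ\mathbb{F}^{-1}$, which is well-defined because $\mathbb{F}$ is a smooth diffeomorphism of $\Omega$; the identity $\mathbb{F}=\mathrm{id}$ outside $B_2$ ensures that boundary values are preserved. The bounds $\det(D\mathbb{F}),\det(D\mathbb{F}^{-1})\geq C>0$ ensure that $\mathbb{F}^*\rho\in\mathrm L^\infty(\Omega)$, $\mathbb{F}^*A\in\mathrm L^\infty(\Omega;\mathbb{R}^{d\times d})$ and $\mathbb{F}^*f\in\mathrm L^2(\Omega)$, so the transformed problem lives in the same functional framework as the original.

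I do not anticipate a real obstacle: the whole argument is chain rule plus a Jacobian substitution, a Piola-type identity that underlies every transformation-based cloaking construction. The only point requiring mild care is to stay in the weak formulation throughout, since $\mathbb{F}^*A$ and $\mathbb{F}^*\rho$ are only assumed measurable and bounded, and a classical (strong) derivation would otherwise require additional regularity on $\mathbb{F}$ and on the coefficients.
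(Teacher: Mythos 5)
Your proof is correct and follows exactly the approach the paper indicates: the paper omits the argument, remarking only that it ``essentially involves performing a change of variables in the weak formulation,'' and your proposal supplies precisely that computation (test-function pull-back, chain rule, Jacobian substitution) with the correct bookkeeping for $\mathbb{F}^*\rho$, $\mathbb{F}^*A$, $\mathbb{F}^*f$ and the identity on $\Omega\setminus B_2$. Nothing essential is missing.
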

The proof of the above proposition has appeared in the literature in most of the papers on ``cloaking via mapping'' techniques. It essentially involves performing a change of variables in the weak formulation associated with the differential equation. We will skip the proof and refer the reader to \cite[subsection 2.2]{Kohn_2008}, \cite[subsection 2.2, page 976]{Kohn_2010}, \cite[section 2, page 8209]{Guenneau_2012} for essential details.\\

Following Kohn and co-authors \cite{Kohn_2008}, we fix a regularising parameter $\eps>0$ and consider a Lipschitz map $\mathcal{F}_\eps:\Omega\mapsto\Omega$ defined below
\begin{equation}\label{eq:std-near-cloak}
\mathcal{F}_\eps(x) :=
\left\{
\begin{array}{cl}
x & \mbox{ for } x\in \Omega\setminus B_2
\\[0.2 cm]
\left( \frac{2-2\eps}{2-\eps} + \frac{\vert x \vert}{2-\eps}\right) \frac{x}{\vert x\vert} & \mbox{ for } x\in B_2 \setminus B_\eps
\\[0.2 cm]
\frac{x}{\eps} & \mbox{ for }x\in B_\eps.
\end{array}\right.
\end{equation}
Note that $\mathcal{F}_\eps$ maps $B_\eps$ to $B_1$ and the annulus
$B_2\setminus B_\eps$ to $B_2\setminus B_1$. The cloaking strategy with the above map corresponds to having $B_1$ as the cloaked region and the annulus $B_2\setminus B_1$ as the cloaking annulus. The Lipschitz map given above is borrowed from \cite[page 5]{Kohn_2008}. Remark that taking $\eps=0$ in \eqref{eq:std-near-cloak} yields the map
\begin{equation}\label{eq:std-singular-cloak}
\mathcal{F}_0(x) :=
\left\{
\begin{array}{cl}
x & \mbox{ for } x\in \Omega\setminus B_2
\\[0.2 cm]
\left( 1 + \frac12 \left\vert x \right\vert \right) \frac{x}{\vert x\vert} & \mbox{ for } x\in B_2 \setminus \{0\}
\end{array}\right.
\end{equation}
which is the singular transform of \cite{Greenleaf_2003, pendry_2006}. The map $\mathcal{F}_0$ is smooth except at the point $0$. It maps $0$ to $B_1$ and $B_2\setminus\{0\}$ to $B_2\setminus B_1$.

\subsection{Essential idea of the paper}
Let us make precise the notion of near-cloaking we will use throughout this paper. Let $f\in\mathrm L^2(\Omega)$ denote a source term such that ${\rm supp}\, f\subset \Omega\setminus B_2$. Let $g\in\mathrm L^2(\partial\Omega)$ denote a Neumann boundary data. Suppose the initial datum $u^{\rm in}\in \mathrm H^1(\Omega)$ be such that ${\rm supp}\, u^{\rm in}\subset \Omega\setminus B_2$. Consider the homogeneous (conductivity being unity) heat equation for the unknown $u_{\rm hom}(t,x)$ with the aforementioned data.
\begin{equation}\label{eq:intro-u_hom}
\begin{aligned}
\partial_t u_{\rm hom} (t,x) & = \Delta u_{\rm hom}(t,x) + f(x) \qquad \mbox{ in }(0,\infty)\times\Omega,
\\[0.2 cm]
\nabla u_{\rm hom} \cdot {\bf n}(x) & = g(x) \qquad \qquad\qquad \qquad\mbox{ on }(0,\infty)\times\partial\Omega,
\\[0.2 cm]
u_{\rm hom}(0,x) & = u^{\rm in}(x) \qquad \qquad\qquad \qquad\qquad \mbox{ in }\Omega.
\end{aligned}
\end{equation}
Here ${\bf n}(x)$ is the unit exterior normal to $\Omega$ at $x\in\partial\Omega$.\\
Our objective is to construct coefficients $\rho_{\rm cl}(x)$ and $A_{\rm cl}(x)$ such that
\[
\rho_{\rm cl}(x) = \eta(x); \qquad
A_{\rm cl}(x) = \beta(x) \qquad \mbox{ in }B_1
\]
for some arbitrary bounded positive density $\eta$ and for some arbitrary bounded positive definite conductivity $\beta$. This construction should further imply that the evolution for the unknown $u_{\rm cl}(t,x)$ given by 
\begin{equation}\label{eq:intro-u_cloak}
\begin{aligned}
\rho_{\rm cl}(x) \partial_t u_{\rm cl} & = \nabla \cdot \Big( A_{\rm cl}(x) \nabla u_{\rm cl} \Big) + f(x) \qquad \mbox{ in }(0,\infty)\times\Omega,
\\[0.2 cm]
\nabla u_{\rm cl} \cdot {\bf n}(x) & = g(x) \qquad \qquad\qquad \qquad \qquad \mbox{ on }(0,\infty)\times\partial\Omega,
\\[0.2 cm]
u_{\rm cl}(0,x) & = u^{\rm in}(x) \qquad \qquad\qquad \qquad\qquad \mbox{ in }\Omega,
\end{aligned}
\end{equation}
is such that there exists a time instant $\mathit{T}<\infty$ so that for all $t\ge \mathit{T}$, we have
\begin{align*}
u_{\rm cl}(t,x) \approx u_{\rm hom}(t,x)
\qquad \mbox{ for }x\in \Omega\setminus B_2.
\end{align*}
The above closeness will be measured in some appropriate function space norm. Most importantly, this approximation should be independent of the density-conductivity pair $\eta,\beta$ in $B_1$. Note, in particular, that the source terms $f(x), g(x), u^{\rm in}(x)$ in \eqref{eq:intro-u_hom} and \eqref{eq:intro-u_cloak} are the same.
\subsection{Cloaking coefficients \& the defect problem}
The following construct using the push-forward maps is now classical
in transformation optics and we use it here for thermodynamics.
\begin{equation}\label{eq:rho-cloak-choice}
\rho_{\rm cl}(x) =
\left\{
\begin{array}{ll}
1 & \quad \mbox{ for }x\in\Omega\setminus B_2,\\[0.2 cm]
\mathcal{F}^*_\eps 1 & \quad \mbox{ for }x\in B_2\setminus B_1,\\[0.2 cm]
\eta(x) & \quad \mbox{ for }x\in B_1
\end{array}\right.
\end{equation}
and
\begin{equation}\label{eq:A-cloak-choice}
A_{\rm cl}(x) =
\left\{
\begin{array}{ll}
{\rm Id} & \quad \mbox{ for }x\in\Omega\setminus B_2,\\[0.2 cm]
\mathcal{F}^*_\eps {\rm Id} & \quad \mbox{ for }x\in B_2\setminus B_1,\\[0.2 cm]
\beta(x) & \quad \mbox{ for }x\in B_1.
\end{array}\right.
\end{equation}
The density coefficient $\eta(x)$ in \eqref{eq:rho-cloak-choice} is any arbitrary real coefficient such that
\[
0 < \eta(x) < \infty \qquad \mbox{ for }x\in B_1.
\]
The conductivity coefficient $\beta(x)$ in \eqref{eq:A-cloak-choice} is any arbitrary bounded positive definite matrix, i.e., there exist positive constants $\kappa_1$ and $\kappa_2$ such that
\[
\kappa_1 \left\vert \xi \right\vert^2 \le \beta(x) \xi \cdot \xi \le \kappa_2 \left\vert \xi \right\vert^2
\qquad \forall (x,\xi)\in B_1\times\R^d.
\]
The following observation is crucial for the analysis to follow. Consider the density-conductivity pair
\begin{equation}\label{eq:rho-small-inclusion}
\rho^\eps(x) =
\left\{
\begin{array}{ll}
1 & \quad \mbox{ for }x\in\Omega\setminus B_\eps,\\[0.2 cm]
\frac{1}{\eps^{d}}\eta\left(\frac{x}{\eps}\right) & \quad \mbox{ for }x\in B_\eps
\end{array}\right.
\end{equation}
and
\begin{equation}\label{eq:A-small-inclusion}
A^\eps(x) =
\left\{
\begin{array}{ll}
{\rm Id} & \quad \mbox{ for }x\in\Omega\setminus B_\eps,\\[0.2 cm]
\frac{1}{\eps^{d-2}}\beta\left(\frac{x}{\eps}\right) & \quad \mbox{ for }x\in B_\eps.
\end{array}\right.
\end{equation}
Next let us compute their push-forwards using the Lipschitz map $\mathcal{F}^\eps$ -- as given by the formulae \eqref{eq:push-forward-formulae} -- yielding
\begin{align*}
\rho_{\rm cl}(y) = \mathcal{F}^*_\eps\rho^\eps(y);
\qquad
A_{\rm cl}(y) = \mathcal{F}^*_\eps A^\eps(y).
\end{align*}
Then, the assertion of Proposition \ref{prop:change-variable-princ} (see in particular the equality \eqref{eq:prop:change-variable-assertion}) implies that the solution $u_{\rm cl}(t,x)$ to \eqref{eq:intro-u_cloak} satisfies for all $t>0$,
\begin{align*}
u_{\rm cl}(t,x) = u^\eps(t,x) 
\qquad \mbox{ for all }x\in \Omega\setminus B_2
\end{align*}
with $u^\eps(t,x)$ being the solution to 
\begin{align}\label{eq:intro:small-inclusion}
\begin{aligned}
\rho^\eps(x) \partial_t u^\eps & = \nabla \cdot \Big( A^\eps(x) \nabla u^\eps \Big) + f(x) \qquad \mbox{ in }(0,\infty)\times\Omega,
\\[0.2 cm]
\nabla u^\eps \cdot {\bf n}(x) & = g(x) \qquad \qquad\qquad \qquad \qquad \mbox{ on }(0,\infty)\times\partial\Omega,
\\[0.2 cm]
u^\eps(0,x) & = u^{\rm in}(x) \qquad \qquad\qquad \qquad\qquad \mbox{ in }\Omega,
\end{aligned}
\end{align}
with the coefficients in the above evolution being given by \eqref{eq:rho-small-inclusion}-\eqref{eq:A-small-inclusion}. The coefficients $\rho^\eps$ and $A^\eps$ are uniform except for their values in $B_\eps$. Hence we treat $B_\eps$ as a defect where the coefficients show high contrast with respect to their values elsewhere in the domain. Due to the nature of these coefficients, we call the evolution problem \eqref{eq:intro:small-inclusion} the \emph{defect problem with high contrast coefficients} or \emph{defect problem} for short. The change-of-variable principle (see Proposition \ref{prop:change-variable-princ}) essentially says that, to study cloaking for the transient heat transfer problem, we need to compare the solution $u^\eps(t,x)$ to the defect problem \eqref{eq:intro:small-inclusion} with the solution $u_{\rm hom}(t,x)$ to the homogeneous problem \eqref{eq:intro-u_hom} for $x\in\Omega\setminus B_2$.
\subsection{Main result}
We are now ready to state the main result of this work.
\begin{thm}\label{thm:near-cloak}
Let the dimension $d\ge2$. Let $u^\eps(t,x)$ be the solution to the defect problem \eqref{eq:intro:small-inclusion} and let $u_{\rm hom}(t,x)$ be the solution to the homogeneous conductivity problem \eqref{eq:intro-u_hom}. Suppose the data in \eqref{eq:intro-u_hom} and \eqref{eq:intro:small-inclusion} are such that
\begin{align*}
f\in\mathrm L^2(\Omega), \quad {\rm supp}\,f\subset \Omega\setminus B_2, \quad g\in\mathrm L^2(\partial\Omega), \quad u^{\rm in}\in\mathrm H^1(\Omega), \quad {\rm supp}\, u^{\rm in}\subset \Omega\setminus B_2.
\end{align*}
Let us further suppose that the source terms satisfy
\begin{align}
\int_\Omega f(x)\, {\rm d}x + \int_{\partial\Omega} g(x)\, {\rm d}\sigma(x) & = 0, \label{eq:thm-near-cloak-compatible}
\\
\int_\Omega u^{\rm in}(x)\, {\rm d}x & = 0. \label{eq:thm-near-cloak-compatible-initial}
\end{align}
Then, there exists a time $\mathit{T} <\infty$ such that for all $t\ge \mathit{T}$ we have
\begin{align}\label{eq:thm:H12-estimate}
\left\Vert u^\eps(t,\cdot) - u_{\rm hom}(t,\cdot) \right\Vert_{\mathrm H^{\frac12}(\partial\Omega)} \le C \left( \left\Vert u^{\rm in} \right\Vert_{\mathrm H^1} + \left\Vert f \right\Vert_{\mathrm L^2(\Omega)} + \left\Vert g \right\Vert_{\mathrm L^2(\partial\Omega)} \right)\eps^d,
\end{align}
where the positive constant $C$ depends on the domain $\Omega$ and the $\mathrm L^\infty$ bounds on the density-conductivity pair $(\eta, \beta)$ in $B_1$.
\end{thm}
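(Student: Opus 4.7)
The plan is to decompose $u^\eps(t)-u_{\rm hom}(t)$ via a triangle inequality through the respective stationary states. Let $U_{\rm hom}$ and $U^\eps$ denote the unique mean-zero weak solutions of the Neumann elliptic problems obtained by dropping the time derivative in \eqref{eq:intro-u_hom} and \eqref{eq:intro:small-inclusion}, respectively; both are well posed thanks to the compatibility condition \eqref{eq:thm-near-cloak-compatible}. Then
\begin{align*}
\bigl\|u^\eps(t)-u_{\rm hom}(t)\bigr\|_{\mathrm H^{1/2}(\partial\Omega)}
&\le \bigl\|u^\eps(t)-U^\eps\bigr\|_{\mathrm H^{1/2}(\partial\Omega)}
 + \bigl\|U^\eps-U_{\rm hom}\bigr\|_{\mathrm H^{1/2}(\partial\Omega)} \\
&\quad + \bigl\|U_{\rm hom}-u_{\rm hom}(t)\bigr\|_{\mathrm H^{1/2}(\partial\Omega)}.
\end{align*}
The three terms are controlled by three independent ingredients: a steady-state cloaking estimate for the middle term, and parabolic equilibration for the two transient terms.

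For the static term, $U^\eps$ and $U_{\rm hom}$ agree in $\Omega\setminus B_\eps$ outside the inclusion up to a perturbation controlled by the Kohn–Shen–Vogelius polarization tensor analysis: the high-contrast inclusion $B_\eps$ with coefficients $(\eps^{-d}\eta(\cdot/\eps),\eps^{-(d-2)}\beta(\cdot/\eps))$ produces an $O(\eps^d)$ perturbation of the Neumann problem in dimension $d$. Since on $\Omega\setminus B_2$ both coefficients are constant, an interior-to-boundary trace estimate transfers the $\eps^d$ bound to the $\mathrm H^{1/2}(\partial\Omega)$ norm, with a constant depending only on $\|f\|_{\mathrm L^2}$, $\|g\|_{\mathrm L^2(\partial\Omega)}$ and the $\mathrm L^\infty$ bounds on $(\eta,\beta)$. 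This is precisely the steady-conduction result quoted in the introduction from \cite{Kohn_2008}.

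For the two transient terms I would use spectral decomposition. Both problems are gradient flows of a quadratic Dirichlet form on the hyperplane of zero-mean (resp.\ zero-$\rho^\eps$-weighted-mean) functions. Writing $u_{\rm hom}(t)-U_{\rm hom}$ as a series in Neumann eigenfunctions of $-\Delta$ and using that the mean-zero conditions \eqref{eq:thm-near-cloak-compatible}-\eqref{eq:thm-near-cloak-compatible-initial} orthogonalize against the constants, one obtains exponential decay with rate equal to the first nonzero Neumann eigenvalue $\lambda_1(\Omega)$, and similarly for $u^\eps(t)-U^\eps$ at rate $\lambda_1^\eps$. Elliptic regularity on $\Omega\setminus B_2$ (where both coefficients reduce to the identity) upgrades the $\mathrm L^2$ decay to an $\mathrm H^{1/2}(\partial\Omega)$ decay of the traces. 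Choosing $T$ large enough that both exponential factors fall below $\eps^d$ then balances the three contributions.

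The main obstacle is making the spectral-gap argument for $u^\eps$ effective: the natural weighted norm $\int \rho^\eps |v|^2$ overweights $B_\eps$ by $\eps^{-d}$ while the Dirichlet form weights $B_\eps$ only by $\eps^{-(d-2)}$, so naively a mode localized in $B_\eps$ gives a Rayleigh quotient of order $\eps^2$ and destroys any uniform gap. I would therefore split any admissible test function as its $\rho^\eps$-average over $B_\eps$ plus a remainder: the remainder is controlled on $\Omega\setminus B_\eps$ by the standard Neumann–Poincaré inequality (independent of $\eps$), and inside $B_\eps$ by the cell eigenvalue problem for $-\eta^{-1}\nabla\cdot(\beta\nabla\cdot)$ on $B_1$, which after rescaling contributes an $\eps^{-2}\cdot\eps^2=O(1)$ quantity. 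Tracking these pieces carefully, together with the constraint that forces the averaged part to be small, should yield a lower bound $\lambda_1^\eps \ge \lambda_*>0$ independent of $\eps$; once this is in hand, taking $\mathit{T}$ of order $\lambda_*^{-1}\log(1/\eps^d)$ concludes the proof.
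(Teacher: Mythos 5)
Your overall strategy is exactly the paper's: triangle inequality through the two stationary states $u^\eps_{\rm eq}$, $u^{\rm eq}_{\rm hom}$, a trace inequality from $\mathrm H^1(\Omega)$ to $\mathrm H^{1/2}(\partial\Omega)$, exponential equilibration for the two transient differences (the paper's Corollaries \ref{cor:uhom-time-decay} and \ref{cor:ueps-time-decay}), and the Friedman--Vogelius/Kohn $\eps^d$-estimate for the stationary Neumann defect problem. For the theorem as stated -- where $T$ is allowed to depend on $\eps$ -- this is enough: take $T$ large enough that the exponential factors swallow the $\eps^{-d}$ and $\eps^{-(d-2)/2}$ prefactors in the high-contrast decay estimate. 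That is precisely what the paper does in Section~\ref{sec:near-cloak-proof}, which does \emph{not} invoke any $\eps$-uniform spectral gap.

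Where you go beyond the paper's proof of Theorem \ref{thm:near-cloak} is your side-argument that $\gamma_\eps=\mu_2^\eps$ is bounded below uniformly in $\eps$. The paper treats this separately (Proposition \ref{prop:first-non-zero-eigenvalue}) via a unitary reduction to a Schr\"odinger-type operator and a Rayleigh--Ritz comparison -- not via your average-plus-remainder split -- so this is a genuinely different route. Unfortunately, your key assertion that ``the constraint forces the averaged part to be small'' does not hold. The constraint $\int_\Omega\rho^\eps\varphi=0$ only ties the weighted mean of $\varphi$ over $B_\eps$ to the mean over $\Omega\setminus B_\eps$; it does not make either small. Concretely, for $d\ge 3$ take $\varphi\equiv c_1$ in $B_\eps$, $\varphi\equiv c_2$ outside a small fixed ball, joined by the capacity potential of $B_\eps$, with $O(1)$ constants $c_1,c_2$ chosen so that $\int\rho^\eps\varphi=0$. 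The Dirichlet energy lives in the transition annulus where the conductivity is $\mathrm{Id}$ and equals the capacity, of order $\eps^{d-2}|c_1-c_2|^2$, while $\int\rho^\eps|\varphi|^2=O(1)$; the Rayleigh quotient is therefore $O(\eps^{d-2})$, not bounded below. Your decomposition alone does not produce the uniform lower bound $\lambda_*>0$, so that part of the sketch has a gap. This does not invalidate your proof of the theorem as stated, since the $\eps$-dependent $T$ suffices (and matches the paper), but it does matter if one wants the waiting time $T$ not to degenerate as $\eps\to 0$.
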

Thanks to the change-of-variable principle (Proposition \ref{prop:change-variable-princ}), we deduce the following corollary.
\begin{cor}\label{cor:diff-cloak-hom}
Let $u_{\rm cl}(t,x)$ be the solution to the thermal cloak problem \eqref{eq:intro-u_cloak} with the cloaking coefficients $\rho_{\rm cl}(x), A_{\rm cl}(x)$ given by \eqref{eq:rho-cloak-choice}-\eqref{eq:A-cloak-choice}. Let $u_{\rm hom}(t,x)$ be the solution to the homogeneous conductivity problem \eqref{eq:intro-u_hom}. Suppose the data in \eqref{eq:intro-u_hom} and \eqref{eq:intro-u_cloak} are such that
\begin{align*}
f\in\mathrm L^2(\Omega), \quad {\rm supp}\,f\subset \Omega\setminus B_2, \quad g\in\mathrm L^2(\partial\Omega), \quad u^{\rm in}\in\mathrm H^1(\Omega), \quad {\rm supp}\, u^{\rm in}\subset \Omega\setminus B_2.
\end{align*}
Let us further suppose that the source terms satisfy \eqref{eq:thm-near-cloak-compatible} and \eqref{eq:thm-near-cloak-compatible-initial}. Then, there exists a time $\mathit{T} <\infty$ such that for all $t\ge \mathit{T}$ we have
\begin{align}\label{eq:cor:H12-estimate}
\left\Vert u_{\rm cl}(t,\cdot) - u_{\rm hom}(t,\cdot) \right\Vert_{\mathrm H^{\frac12}(\partial\Omega)} \le C \left( \left\Vert u^{\rm in} \right\Vert_{\mathrm H^1} + \left\Vert f \right\Vert_{\mathrm L^2(\Omega)} + \left\Vert g \right\Vert_{\mathrm L^2(\partial\Omega)} \right) \eps^d,
\end{align}
where the positive constant $C$ depends on the domain $\Omega$ and the $\mathrm L^\infty$ bounds on the density-conductivity pair $(\eta, \beta)$ in $B_1$.
\end{cor}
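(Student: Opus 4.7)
The plan is to derive Corollary \ref{cor:diff-cloak-hom} as an essentially immediate consequence of Theorem \ref{thm:near-cloak} combined with the change-of-variable principle stated in Proposition \ref{prop:change-variable-princ}. The key observation has already been laid out in the paragraph following \eqref{eq:A-small-inclusion}: the cloaking coefficients $\rho_{\rm cl}$ and $A_{\rm cl}$ defined by \eqref{eq:rho-cloak-choice}-\eqref{eq:A-cloak-choice} are precisely the push-forwards under $\mathcal{F}_\eps$ of the high-contrast coefficients $\rho^\eps$ and $A^\eps$ given by \eqref{eq:rho-small-inclusion}-\eqref{eq:A-small-inclusion}. Thus the bridge between $u_{\rm cl}$ and $u^\eps$ is already built; I just need to exploit it.

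First I would verify the hypotheses of Proposition \ref{prop:change-variable-princ} for the map $\mathcal{F}_\eps$: it is Lipschitz, invertible, equal to the identity outside $B_2$, and (by inspection of \eqref{eq:std-near-cloak}) its Jacobian and that of its inverse are bounded below away from zero for each fixed $\eps>0$. Since the data $f, g, u^{\rm in}$ are supported in $\Omega\setminus B_2$, where $\mathcal{F}_\eps = \mathrm{Id}$, the push-forwards $\mathcal{F}_\eps^* f$ and $\mathcal{F}_\eps^* u^{\rm in}$ coincide with $f$ and $u^{\rm in}$, and the Neumann data on $\partial\Omega$ are unchanged as well. Hence the change-of-variable principle applies to the parabolic problem \eqref{eq:intro-u_cloak} and yields the pointwise identity
\begin{align*}
u_{\rm cl}(t,x) = u^\eps(t,x) \qquad \text{for every } t>0 \text{ and every } x\in\Omega\setminus B_2,
\end{align*}
where $u^\eps$ solves the defect problem \eqref{eq:intro:small-inclusion}.

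Next, since $\partial\Omega\subset\Omega\setminus B_2$, taking traces on $\partial\Omega$ gives
\begin{align*}
u_{\rm cl}(t,\cdot)\big|_{\partial\Omega} = u^\eps(t,\cdot)\big|_{\partial\Omega} \qquad \text{for every } t>0,
\end{align*}
and consequently
\begin{align*}
\bigl\Vert u_{\rm cl}(t,\cdot) - u_{\rm hom}(t,\cdot) \bigr\Vert_{\mathrm H^{\frac12}(\partial\Omega)}
= \bigl\Vert u^\eps(t,\cdot) - u_{\rm hom}(t,\cdot) \bigr\Vert_{\mathrm H^{\frac12}(\partial\Omega)}.
\end{align*}
Finally, the compatibility conditions \eqref{eq:thm-near-cloak-compatible}-\eqref{eq:thm-near-cloak-compatible-initial} and the support hypotheses on $f, u^{\rm in}$ are exactly those of Theorem \ref{thm:near-cloak}, so I can invoke that theorem to bound the right-hand side by the desired $\eps^d$-estimate, obtaining \eqref{eq:cor:H12-estimate} with the same threshold time $\mathit T$ and the same constant $C$.

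In terms of difficulty, there is no real obstacle here: the content of the corollary is entirely encoded in Theorem \ref{thm:near-cloak}, and the only work is the bookkeeping verification that $\mathcal{F}_\eps$ satisfies the assumptions of Proposition \ref{prop:change-variable-princ} and that the data transport trivially through the push-forward because they are supported away from $B_2$. The substantive mathematics (the long-time equilibration analysis and the $\eps^d$ small-inclusion estimate) has been absorbed into Theorem \ref{thm:near-cloak}, so the corollary is genuinely a transfer-of-framework result rather than a new argument.
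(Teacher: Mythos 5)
Your proposal is correct and follows exactly the route the paper intends: the paper itself derives the corollary in one line by invoking Proposition \ref{prop:change-variable-princ} (together with the identification $\rho_{\rm cl}=\mathcal{F}^*_\eps\rho^\eps$, $A_{\rm cl}=\mathcal{F}^*_\eps A^\eps$ and the resulting equality $u_{\rm cl}=u^\eps$ on $\Omega\setminus B_2$, all stated explicitly just before Theorem \ref{thm:near-cloak}) and then applying Theorem \ref{thm:near-cloak}. Your bookkeeping of the hypotheses of the change-of-variable principle and of the invariance of the data under push-forward is exactly the verification the paper leaves implicit.
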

Whenever the source terms $f(x), g(x)$ and the initial datum $u^{\rm in}(x)$ satisfy the compatibility conditions \eqref{eq:thm-near-cloak-compatible}-\eqref{eq:thm-near-cloak-compatible-initial}, we shall call them \emph{admissible}. Our proof of the Theorem \ref{thm:near-cloak} relies upon two ideas
\begin{enumerate}
\item[$(i)$] long time behaviour of solutions to parabolic problems.
\item[$(ii)$] boundary measurement estimates in the presence of small inhomogeneities.
\end{enumerate}

\begin{rem}
Our strategy of proof goes via the study of the steady state problems associated with the evolutions \eqref{eq:intro-u_hom} and \eqref{eq:intro:small-inclusion}. Those elliptic boundary value problems demand that the source terms be compatible to guarantee existence and uniqueness of solution. These compatibility conditions on the bulk and Neumann boundary sources $f,g$ translates to the zero mean assumption \eqref{eq:thm-near-cloak-compatible}. Note that the compatibility condition \eqref{eq:thm-near-cloak-compatible} is not necessary for the well-posedness of the transient problems.
\end{rem}
A remark about the assumption \eqref{eq:thm-near-cloak-compatible-initial} on the initial datum $u^{\rm in}$ in Theorem \ref{thm:near-cloak} will be made later after the proof of Theorem \ref{thm:near-cloak} as it will be clearer to the reader.

\section{Study of the long time behaviour}\label{sec:spec}
In this section, we deal with the long time asymptotic analysis for the parabolic problems. Consider the initial-boundary value problem for the unknown $v(t,x)$.
\begin{equation}\label{eq:spec-ibvp-v}
\begin{aligned}
\partial_t v & = \Delta v \qquad \mbox{ in }(0,\infty)\times\Omega,
\\[0.2 cm]
\nabla v \cdot {\bf n}(x) & = 0 \quad \qquad \qquad\mbox{ on }(0,\infty)\times\partial\Omega,
\\[0.2 cm]
v(0,x) & = v^{\rm in}(x) \qquad \qquad\qquad \mbox{ in }\Omega.
\end{aligned}
\end{equation}
We give an asymptotic result for the solution to \eqref{eq:spec-ibvp-v} in the $t\to\infty$ limit.
\begin{prop}\label{prop:v-H1-t-infty}
Let $v(t,x)$ be the solution to the initial-boundary value problem \eqref{eq:spec-ibvp-v}. Suppose the initial datum $v^{\rm in}\in\mathrm H^1(\Omega)$. Then, there exists a constant $\gamma>0$ such that
\begin{align}\label{eq:asy-limit-v}
\left\Vert v(t,\cdot) - \langle v^{\rm in} \rangle \right\Vert_{\mathrm H^1(\Omega)} \le e^{-\gamma t} \left\Vert v^{\rm in} \right\Vert_{\mathrm H^1(\Omega)}
\qquad \mbox{ for all }\, t>0
\end{align}
where $\langle v^{\rm in} \rangle$ denotes the initial average, i.e.,
\begin{align*}
\langle v^{\rm in} \rangle := \frac{1}{\left\vert \Omega \right\vert} \int_\Omega v^{\rm in}(x)\, {\rm d}x.
\end{align*}
\end{prop}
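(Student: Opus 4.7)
The plan is to exploit the spectral gap of the Neumann Laplacian on $\Omega$. Since $\Omega\subset\R^d$ is a smooth bounded domain, the operator $-\Delta$ with homogeneous Neumann boundary conditions has compact resolvent, hence discrete spectrum $0=\lambda_0 < \lambda_1 \le \lambda_2 \le \cdots \to +\infty$, together with an $\mathrm L^2(\Omega)$-orthonormal eigenbasis $\{\varphi_k\}_{k\ge 0}$. The kernel is one-dimensional and spanned by the constant $\varphi_0 = |\Omega|^{-1/2}$, so the projection onto $\varphi_0$ reproduces exactly the mean $\langle v^{\rm in}\rangle$. Strict positivity of $\lambda_1$ is just the Poincar\'e--Wirtinger inequality applied on zero-mean functions, and it will serve as the exponential rate $\gamma$.

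Concretely, I would represent the solution to \eqref{eq:spec-ibvp-v} by its spectral expansion
\[
v(t,x) \;=\; \sum_{k\ge 0} e^{-\lambda_k t}\, \hat v_k\, \varphi_k(x), \qquad \hat v_k := \int_\Omega v^{\rm in}(x)\,\varphi_k(x)\,{\rm d}x,
\]
which is the standard semigroup representation for the Neumann heat equation on $\mathrm L^2(\Omega)$. Subtracting the $k=0$ mode gives
\[
v(t,x) - \langle v^{\rm in}\rangle \;=\; \sum_{k\ge 1} e^{-\lambda_k t}\, \hat v_k\, \varphi_k(x).
\]
Using the $\mathrm L^2$-orthogonality of $\{\varphi_k\}$ together with the identity $\int_\Omega \nabla\varphi_j\cdot\nabla\varphi_k\,{\rm d}x = \lambda_j\delta_{jk}$, obtained by integration by parts and the Neumann condition, I would then compute
\[
\left\Vert v(t,\cdot) - \langle v^{\rm in}\rangle \right\Vert_{\mathrm H^1(\Omega)}^2 \;=\; \sum_{k\ge 1}(1+\lambda_k)\, e^{-2\lambda_k t}\, |\hat v_k|^2 \;\le\; e^{-2\lambda_1 t} \sum_{k\ge 1}(1+\lambda_k)\, |\hat v_k|^2 \;\le\; e^{-2\lambda_1 t}\,\|v^{\rm in}\|_{\mathrm H^1(\Omega)}^2.
\]
Taking $\gamma = \lambda_1$ and a square root yields \eqref{eq:asy-limit-v}.

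The only point requiring care -- rather than a genuine obstacle -- is the norm equivalence $\|u\|_{\mathrm H^1(\Omega)}^2 = \sum_{k\ge 0}(1+\lambda_k)|\hat u_k|^2$ for $u\in \mathrm H^1(\Omega)$, which follows from the eigenfunction orthogonality identity above together with the characterisation of $\mathrm H^1(\Omega)$ as the form domain of the Neumann Laplacian. A fully PDE-flavoured alternative, avoiding spectral theory beyond Poincar\'e--Wirtinger, is to set $w := v - \langle v^{\rm in}\rangle$ (which solves the same Neumann heat equation, with zero mean preserved in time), test the equation first by $w$ to obtain $\tfrac{d}{dt}\|w\|_{\mathrm L^2}^2 \le -2\lambda_1\|w\|_{\mathrm L^2}^2$, then by $-\Delta w$ to obtain $\tfrac{d}{dt}\|\nabla w\|_{\mathrm L^2}^2 \le -2\lambda_1\|\nabla w\|_{\mathrm L^2}^2$, and close by Gr\"onwall. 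The $\mathrm H^2$-regularity needed to legitimise the second energy identity is supplied by parabolic smoothing for $t>0$, or by a routine approximation of $v^{\rm in}\in \mathrm H^1(\Omega)$ by smoother data followed by passage to the limit in \eqref{eq:asy-limit-v}.
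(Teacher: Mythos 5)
Your proposal is correct and is precisely the spectral-decomposition argument the paper points to (it omits the proof as standard, citing Pazy for the spectral route and Chipot for the energy-method alternative, both of which you sketch). The identification $\gamma=\lambda_1$ matches the paper's later use of $\mu_2$, its first non-zero Neumann eigenvalue.
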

Proof of the above proposition is standard and is based in the spectral study of the corresponding elliptic problem, see e.g. \cite{Pazy_1983}. Alternatively, we could also use energy methods based on a priori estimates on the solution of the parabolic partial differential equations, see e.g. \cite[Chapter 13]{Chipot_2000}. As the proof of Proposition \ref{prop:v-H1-t-infty} is quite standard, we omit the proof.\\
Let us recall certain notions necessary for our proof. Let $\{\varphi_k\}_{k=1}^\infty\subset \mathrm H^1(\Omega)$ denote the collection of Neumann Laplacian eigenfunctions on $\Omega$, i.e., for each $k\in\mathbb{N}$,
\begin{equation}\label{eq:asy-spectral-pb-Neumann-Laplacian}
\begin{aligned}
-\Delta \varphi_k & = \mu_k \varphi_k \qquad \mbox{ in }\Omega,
\\[0.2 cm]
\nabla \varphi_k \cdot {\bf n}(x) & = 0 \qquad \quad \mbox{ on }\partial\Omega,
\end{aligned}
\end{equation}
with $\{\mu_k\}$ denoting the eigenvalues. Recall that the spectrum is discrete, non-negative and with no finite accumulation point, i.e.,
\begin{align*}
0=\mu_1 \le \mu_2 \le \cdots \to \infty.
\end{align*}

The next proposition is a result similar in flavour to Proposition \ref{prop:v-H1-t-infty}, but in a more general parabolic setting with high contrast in density and conductivity coefficients. More precisely, let us consider the heat equation with uniform conductivity in the presence of a defect with high contrast coefficients. We particularly choose the conductivity matrix to be \eqref{eq:A-small-inclusion} and the density coefficient to be \eqref{eq:rho-small-inclusion}. For an unknown $v^\eps(t,x)$, consider the initial-boundary value problem
\begin{equation}\label{eq:spec:small-inclusion}
\begin{aligned}
\rho^\eps(x) \partial_t v^\eps & = \nabla \cdot \Big( A^\eps(x) \nabla v^\eps \Big) \qquad \mbox{ in }(0,\infty)\times\Omega,
\\[0.2 cm]
\nabla v^\eps \cdot {\bf n}(x) & = 0 \quad \qquad\qquad\qquad \qquad\mbox{ on }(0,\infty)\times\partial\Omega,
\\[0.2 cm]
v^\eps(0,x) & = v^{\rm in}(x) \qquad \qquad\qquad \qquad\mbox{ in }\Omega.
\end{aligned}
\end{equation}
We give an asymptotic result for the solution $v^\eps(t,x)$ to \eqref{eq:spec:small-inclusion} in the $t\to\infty$ limit. As before, we are interested in the $\left\Vert \cdot \right\Vert_{\mathrm H^1}$-norm rather than the $\left\Vert \cdot \right\Vert_{\mathrm L^2}$-norm.

\begin{prop}\label{prop:v-eps-H1-t-infty}
Let $v^\eps(t,x)$ be the solution to the initial-boundary value problem \eqref{eq:spec:small-inclusion}. Suppose the initial datum $v^{\rm in}\in\mathrm H^1(\Omega)\cap \mathrm L^\infty(\Omega)$. Then, there exists a constant $\gamma_\eps>0$ such that for all $t>0$, we have
\begin{align}\label{eq:asy-limit-v-eps}
\left\Vert v^\eps(t,\cdot) - \mathfrak{m}^\eps \right\Vert_{\mathrm H^1(\Omega)} \le e^{-\gamma_\eps t} \left( \frac{1}{\eps^{d}} \left\Vert v^{\rm in} \right\Vert_{\mathrm L^2(\Omega)} + \frac{1}{\eps^{\frac{d-2}{2}}} \left\Vert \nabla v^{\rm in} \right\Vert_{\mathrm L^2(\Omega)} \right)
\end{align}
where $\mathfrak{m}^\eps$ denotes the following weighted initial average
\[
\mathfrak{m}^\eps := \frac{1}{\left\vert \Omega \right\vert \langle \rho^\eps \rangle} \int_\Omega \rho^\eps(x) v^{\rm in}(x)\, {\rm d}x
\qquad 
\mbox{ with }
\quad 
\langle \rho^\eps \rangle = \frac{1}{\left\vert \Omega \right\vert}\int_\Omega \rho^\eps(x)\, {\rm d}x.
\]
\end{prop}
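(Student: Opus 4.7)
The plan is to transpose the spectral argument underpinning Proposition~\ref{prop:v-H1-t-infty} to the weighted setting naturally associated with $\rho^\eps$ and $A^\eps$, treating $\eps>0$ as fixed throughout. First I would introduce the generalised eigenvalue problem
\begin{equation*}
-\nabla\cdot\bigl(A^\eps(x)\nabla\varphi_k^\eps\bigr)=\lambda_k^\eps\,\rho^\eps(x)\,\varphi_k^\eps\ \text{in }\Omega,\qquad \nabla\varphi_k^\eps\cdot\mathbf{n}=0\ \text{on }\partial\Omega.
\end{equation*}
For each fixed $\eps$, both $\rho^\eps$ and the eigenvalues of $A^\eps$ are a.e.\ bounded above and below away from zero, so Rellich compactness delivers a discrete spectrum $0=\lambda_1^\eps<\lambda_2^\eps\le\cdots\to\infty$ with an orthonormal basis $\{\varphi_k^\eps\}\subset\mathrm H^1(\Omega)$ of $\mathrm L^2_{\rho^\eps}(\Omega)$ (orthogonal also for the Dirichlet form $(u,v)\mapsto\int_\Omega A^\eps\nabla u\cdot\nabla v$), with constant first eigenfunction and strictly positive spectral gap $\gamma_\eps:=\lambda_2^\eps$.

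Expanding $v^\eps(t,\cdot)=\sum_{k\ge 1}\alpha_k^\eps\,e^{-\lambda_k^\eps t}\varphi_k^\eps$, the $k=1$ contribution is precisely $\mathfrak{m}^\eps$, by the conservation of $\int_\Omega\rho^\eps v^\eps\,dx$ along the Neumann flow. The spectral gap then yields the weighted decay
\begin{equation*}
\bigl\|v^\eps(t,\cdot)-\mathfrak{m}^\eps\bigr\|_{\mathrm L^2_{\rho^\eps}}^{2}=\sum_{k\ge 2}|\alpha_k^\eps|^{2}e^{-2\lambda_k^\eps t}\le e^{-2\gamma_\eps t}\|v^{\mathrm{in}}\|_{\mathrm L^2_{\rho^\eps}}^{2}.
\end{equation*}
For the gradient I would test the PDE with $\partial_t v^\eps$ to obtain $\frac{d}{dt}\int_\Omega A^\eps|\nabla v^\eps|^{2}=-2\int_\Omega\rho^\eps|\partial_t v^\eps|^{2}$, then combine the eigen-identity $\int A^\eps|\nabla\varphi_k^\eps|^{2}=\lambda_k^\eps\int\rho^\eps|\varphi_k^\eps|^{2}$ with $\lambda_k^\eps\ge\gamma_\eps$ for $k\ge 2$ to derive the weighted Poincaré-type inequality $\int\rho^\eps|\partial_t v^\eps|^{2}\ge\gamma_\eps\int A^\eps|\nabla v^\eps|^{2}$; Gr\"onwall's lemma then gives $\int_\Omega A^\eps|\nabla v^\eps(t,\cdot)|^{2}\le e^{-2\gamma_\eps t}\int_\Omega A^\eps|\nabla v^{\mathrm{in}}|^{2}$.

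The final step is to un-weight, which is where the $\eps$-factors in \eqref{eq:asy-limit-v-eps} come from. The explicit forms \eqref{eq:rho-small-inclusion}--\eqref{eq:A-small-inclusion} provide, with $C$ independent of $\eps\in(0,1]$, the comparisons $\|w\|_{\mathrm L^2(\Omega)}\le C\|w\|_{\mathrm L^2_{\rho^\eps}}$ and $\|\nabla w\|_{\mathrm L^2(\Omega)}^{2}\le C\int_\Omega A^\eps|\nabla w|^{2}$ (exploiting $\rho^\eps\ge 1$ outside $B_\eps$ and the tiny volume of $B_\eps$), together with the opposite-direction bounds $\|v^{\mathrm{in}}\|_{\mathrm L^2_{\rho^\eps}}^{2}\le C\eps^{-d}\|v^{\mathrm{in}}\|_{\mathrm L^2}^{2}$ and $\int A^\eps|\nabla v^{\mathrm{in}}|^{2}\le C\eps^{-(d-2)}\|\nabla v^{\mathrm{in}}\|_{\mathrm L^2}^{2}$, obtained by splitting the integrals over $B_\eps$ and $\Omega\setminus B_\eps$ and using the $\mathrm L^\infty$ bounds on $\eta$ and $\beta$. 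Combining these four comparisons with the two weighted decay estimates proves the announced bound \eqref{eq:asy-limit-v-eps}.

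I expect the main obstacle to be bookkeeping rather than conceptual. Because $\rho^\eps$ blows up as $\eps^{-d}$ inside $B_\eps$, the $\rho^\eps$-weighted $\mathrm L^2$ norm of the initial datum can inflate the unweighted one by that factor, which is exactly what produces the $\eps^{-d}$ and $\eps^{-(d-2)/2}$ prefactors. The spectral gap $\gamma_\eps$ exists and is strictly positive for every fixed $\eps$ but will presumably degenerate as $\eps\to 0$; no uniformity in $\eps$ is claimed, however, and this is consistent with how \eqref{eq:asy-limit-v-eps} will be applied later at an $\eps$-dependent threshold time $\mathit T$.
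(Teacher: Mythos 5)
Your proposal is correct and follows essentially the same route as the paper: expand in the generalized Neumann eigenbasis associated with $(\rho^\eps,A^\eps)$, identify the $k=1$ mode with $\mathfrak m^\eps$, obtain exponential decay in the $\rho^\eps$-weighted $\mathrm L^2$ norm and the $A^\eps$-weighted Dirichlet form from the spectral gap $\mu_2^\eps$, and then pass to unweighted norms using the $\mathrm L^\infty$ bounds on $\eta,\beta$. The only cosmetic difference is that the paper reads $\int_\Omega A^\eps|\nabla v^\eps|^2$ directly off the eigenfunction expansion rather than via your energy identity plus Gr\"onwall (the two are equivalent), and your bookkeeping in fact produces the sharper prefactor $\eps^{-d/2}$ on the $\mathrm L^2$ term (since $\rho^\eps\lesssim\eps^{-d}$ gives $\|w\|_{\mathrm L^2_{\rho^\eps}}\lesssim\eps^{-d/2}\|w\|_{\mathrm L^2}$), which a fortiori implies the stated bound \eqref{eq:asy-limit-v-eps}.
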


\begin{rem}
From estimate \eqref{eq:asy-limit-v-eps} we conclude that the estimate on the right hand side is a product of an exponential decay term and a term of $\mathcal{O}(\eps^{-d})$. So, if $\gamma^\eps\gtrsim 1$, uniformly in $\eps$, then the solution converges to the weighted initial average in the long time regime. We will demonstrate that the decay rate $\gamma^\eps$ is bounded away from zero uniformly (with respect to $\eps$) in subsection \ref{ssec:schrodinger}.

If the density coefficient $\rho^\eps(x)$ is given by \eqref{eq:rho-small-inclusion}, then
\[
\langle \rho^\eps \rangle = \frac{1}{\left\vert \Omega \right\vert} \left\{ \int_{\Omega\setminus B_\eps} 1\, {\rm d}x + \frac{1}{\eps^d} \int_{B_\eps} 1\, {\rm d}x \right\} = \frac{1}{\left\vert \Omega \right\vert} \left( \left\vert \Omega\setminus B_\eps\right\vert + \frac{\pi^\frac{d}{2}}{\Gamma\left(\frac{d}{2} + 1\right)} \right).
\]
where $\Gamma(\cdot)$ denotes the gamma function. Substituting for $\langle \rho^\eps \rangle$ in the expression for the weighted initial average yields
\[
\mathfrak{m}^\eps =  \left( \left\vert \Omega\setminus B_\eps\right\vert + \frac{\pi^\frac{d}{2}}{\Gamma\left(\frac{d}{2} + 1\right)} \right)^{-1} \left\{ \int_{\Omega\setminus B_\eps} v^{\rm in}(x)\, {\rm d}x + \frac{1}{\eps^d} \int_{B_\eps} v^{\rm in}(x)\, {\rm d}x \right\}.
\]
Using the assumption on the initial datum $v^{\rm in}$ that it belongs to $\mathrm H^1(\Omega)\cap \mathrm L^\infty(\Omega)$, we get the following uniform bound (uniform with respect to $\eps$) on the weighted initial average
\[
\left\vert \mathfrak{m}^\eps \right\vert
\le 1 + \frac{\left\vert \Omega\right\vert \Gamma\left(\frac{d}{2} + 1\right)}{\pi^\frac{d}{2}} \left\Vert v^{\rm in} \right\Vert_{\mathrm L^2(\Omega)} < \infty.
\]
\end{rem}

\begin{proof}[Proof of Proposition \ref{prop:v-eps-H1-t-infty}]
Let $\mu^\eps_k$ and $\varphi^\eps_k$ be the Neumann eigenvalues and eigenfunctions defined as
\begin{equation}\label{eq:spec:eigenpair-small-inclusion}
\begin{aligned}
-\nabla \cdot \Big( A^\eps \nabla  \varphi^\eps_k \Big) & = \mu^\eps_k \rho^\eps \varphi^\eps_k \qquad \mbox{ in }\Omega,
\\[0.2 cm]
\nabla \varphi^\eps_k \cdot {\bf n}(x) & = 0 \qquad\qquad \quad \mbox{ on }\partial\Omega,
\end{aligned}
\end{equation}
where the conductivity-density pair $A^\eps(x)$, $\rho^\eps(x)$ is given by \eqref{eq:A-small-inclusion}-\eqref{eq:rho-small-inclusion}. Here again the spectrum is discrete, non-negative and with no finite accumulation point, i.e.,
\begin{align*}
0=\mu^\eps_1 \le \mu^\eps_2 \le \cdots \to \infty.
\end{align*}
The solution $v^\eps(t,x)$ to \eqref{eq:spec:small-inclusion} can be represented in terms of the basis functions $\{\varphi^\eps_k\}_{k=1}^\infty$ as
\begin{align*}
v^\eps(t,x) = \sum_{k=1}^\infty \mathfrak{b}^\eps_k(t) \varphi^\eps_k(x) 
\qquad
\mbox{ with }
\quad
\mathfrak{b}^\eps_k(t) = \int_\Omega v^\eps(t,x)\rho^\eps(x)\varphi^\eps_k(x)\, {\rm d}x.
\end{align*}
The general representation formula for the solution to \eqref{eq:spec:small-inclusion} becomes
\begin{align}\label{eq:spec:represent-v-eps}
v^\eps(t,x) = \sum_{k=1}^\infty \mathfrak{b}^\eps_k(0) e^{-\mu_kt} \varphi_k(x) = \mathfrak{b}^\eps_1(0)  \varphi^\eps_1 + \sum_{k=2}^\infty \mathfrak{b}^\eps_k(0) e^{-\mu^\eps_kt} \varphi^\eps_k(x).
\end{align}
The first term in the above representation is nothing but the weighted initial average
\begin{align*}
\varphi^\eps_1\mathfrak{b}^\eps_1(0) = \left\vert\varphi^\eps_1\right\vert^2 \int_\Omega v^{\rm in}(x) \rho^\eps(x) \, {\rm d}x = \frac{1}{\left\vert \Omega \right\vert \langle \rho^\eps\rangle} \int_\Omega v^{\rm in}(x) \rho^\eps(x) \, {\rm d}x =: \mathfrak{m}^\eps.
\end{align*}
The representation \eqref{eq:spec:represent-v-eps} says that the weighted $\mathrm L^2(\Omega)$-norm of $v^\eps(t,x)$ is 
\begin{align*}
\left\Vert v^\eps(t,\cdot) \right\Vert^2_{\mathrm L^2(\Omega;\rho^\eps)} = \left\vert \mathfrak{b}^\eps_1(0) \right\vert^2 + \sum_{k=2}^\infty \left\vert \mathfrak{b}^\eps_k(0) \right\vert^2 e^{-2\mu^\eps_kt}
\end{align*}
where we have used the following notation for the weighted Lebesgue space norm:
\[
\left\Vert w \right\Vert_{\mathrm L^2(\Omega;\rho^\eps)} := \int_\Omega w(x) \rho^\eps(x)\, {\rm d}x.
\]
The density coefficient $\rho^\eps(x)$ defined in \eqref{eq:rho-small-inclusion} appears as the weight function in the above Lebesgue space. It follows that
\[
\left\Vert w \right\Vert_{\mathrm L^2(\Omega)} \lesssim \left\Vert w \right\Vert_{\mathrm L^2(\Omega;\rho^\eps)} \lesssim \frac{1}{\eps^{d}} \left\Vert w \right\Vert_{\mathrm L^2(\Omega)}
\]
thanks to the definition of $\rho^\eps(x)$ in \eqref{eq:rho-small-inclusion}. Computing the weighted norm of the difference $v^\eps(t,x) - \mathfrak{m}^\eps$, we get
\begin{equation*}
\begin{aligned}
\left\Vert v^\eps(t,\cdot) - \mathfrak{m}^\eps \right\Vert^2_{\mathrm L^2(\Omega;\rho^\eps)} 
= \sum_{k=2}^\infty \left\vert \mathfrak{b}^\eps_k(0) \right\vert^2 e^{-2\mu^\eps_kt} 
\le e^{-2\mu^\eps_2t} \left\Vert v^{\rm in} \right\Vert^2_{\mathrm L^2(\Omega;\rho^\eps)}
\end{aligned}
\end{equation*}
Hence we deduce
\begin{align}\label{eq:v-eps-L2-t-infty}
\left\Vert v^\eps(t,\cdot) - \mathfrak{m}^\eps \right\Vert^2_{\mathrm L^2(\Omega)} 
\le 
e^{-2\mu^\eps_2t} \frac{1}{\eps^{2d}} \left\Vert v^{\rm in} \right\Vert^2_{\mathrm L^2(\Omega)}
\end{align}
Next, it follows from the representation formula \eqref{eq:spec:represent-v-eps} that
\begin{align*}
\sqrt{A^\eps(x)} \nabla v^\eps(t,x) = \sum_{k=2}^\infty \mathfrak{b}^\eps_k(0) e^{-\mu^\eps_kt} \sqrt{A^\eps(x)} \nabla \varphi^\eps_k(x)
\end{align*}
which in turn implies
\begin{align*}
\left\Vert \sqrt{A^\eps} \nabla v^\eps \right\Vert^2_{\mathrm L^2(\Omega)}
& = \sum_{k=2}^\infty \left\vert \mathfrak{b}^\eps_k(0) \right\vert^2 \mu^\eps_k\, e^{-2\mu^\eps_kt}
\le e^{-2\mu^\eps_2t} \sum_{k=2}^\infty \left\vert \mathfrak{b}^\eps_k(0) \right\vert^2 \mu^\eps_k
\\[0.2 cm]
& 
= e^{-2\mu^\eps_2t} \left\Vert \sqrt{A^\eps} \nabla v^{\rm in} \right\Vert^2_{\mathrm L^2(\Omega)}
\le e^{-2\mu^\eps_2t}\frac{1}{\eps^{d-2}} \left\Vert \nabla v^{\rm in} \right\Vert^2_{\mathrm L^2(\Omega)}
\end{align*}
Furthermore, as $1 \lesssim \left\Vert A^\eps \right\Vert_{\mathrm L^\infty}$, it follows that
\begin{align*}
\left\Vert \nabla v^\eps \right\Vert^2_{\mathrm L^2(\Omega)}
\le e^{-2\mu^\eps_2t} \frac{1}{\eps^{d-2}} \left\Vert \nabla v^{\rm in} \right\Vert^2_{\mathrm L^2(\Omega)}
\end{align*}
Repeating the above computations for the difference $v^\eps(t,x) - \mathfrak{m}^\eps$, we obtain
\begin{align}\label{eq:nabla-v-eps-L2-t-infty}
\left\Vert \nabla \left( v^\eps - \mathfrak{m}^\eps \right) \right\Vert^2_{\mathrm L^2(\Omega)}
\le e^{-2\mu^\eps_2t} \frac{1}{\eps^{d-2}} \left\Vert \nabla v^{\rm in} \right\Vert^2_{\mathrm L^2(\Omega)}
\end{align}
Gathering the inequalities \eqref{eq:v-eps-L2-t-infty}-\eqref{eq:nabla-v-eps-L2-t-infty} together proves the proposition with the constant $\gamma_\eps = \mu^\eps_2$, i.e., the first non-zero Neumann eigenvalue in the spectral problem \eqref{eq:spec:eigenpair-small-inclusion} on $\Omega$ with the high contrast density-conductivity pair $(\rho^\eps(x), A^\eps(x))$.
\end{proof}

\subsection{Reduction to a Schr\"odinger operator}\label{ssec:schrodinger}
The constants $\gamma$ and $\gamma^\eps$ in Propositions
\ref{prop:v-H1-t-infty} and \ref{prop:v-eps-H1-t-infty} respectively
give the rate of convergence to equilibrium. As the proofs in the previous subsection suggest, these rates are nothing but the first non-zero eigenvalues of the associated Neumann eigenvalue problems.

The constant decay rate $\gamma_\eps$ in Proposition \ref{prop:v-eps-H1-t-infty} may depend on the regularisation parameter $\eps$. In our setting, $\eps$ is nothing but the radius of the inclusion. Hence, to understand the behaviour of $\gamma^\eps$ in terms of $\eps$, we need to study the perturbations in the eigenvalues caused by the presence of inhomogeneities with conductivities and densities different from the background conductivity and density. More importantly, we need to understand the spectrum of transmission problems with high contrast conductivities and densities. The spectral analysis of elliptic operators with such conductivity matrices is done extensively in the literature in the context of electric impedance tomography -- see \cite{Ammari_2003, Ammari_2009} and references therein for further details. In \cite{Ammari_2003}, for example, the authors give an asymptotic expansion for the eigenvalues $\mu^\eps_k$ in terms of the regularising parameter $\eps$ (even in the case of multiplicities). We refer the readers to \cite[Eq.(23), page 74]{Ammari_2003} for the precise expansion. For high contrast conductivities such as $A^\eps$, refer to the concluding remarks in \cite[pages 74-75]{Ammari_2003} and references therein. Another important point to be noted is that the above mentioned works of Ammari and co-authors do not treat high contrast densities while addressing the spectral problems. For our setting -- more specifically for the spectral problem \eqref{eq:spec:eigenpair-small-inclusion} -- the readers are directed to consult the review paper of Chechkin \cite{Chechkin_2006} which goes in detail about the spectral problem in a related setting. Furthermore, the review paper \cite{Chechkin_2006} gives exhaustive reference to literature where similar spectral problems are addressed.

Rather than deducing the behaviour of the first non-zero eigenvalue of the spectral problem \eqref{eq:spec:eigenpair-small-inclusion} from \cite{Chechkin_2006}, we propose an alternate approach. Studying \eqref{eq:spec:eigenpair-small-inclusion} is the same as the study of the spectral problem for the operator
\begin{align}\label{eq:schrod:operator-L}
\mathcal{L}\, h := \frac{1}{\rho^\eps(x)} \nabla \cdot \Big( A^\eps(x) \nabla  h(x) \Big).
\end{align}
The idea is to show that the study of the spectral problem for $\mathcal{L}$ is analogous to the study of the spectral problem for a Schr\"odinger-type operator. The essential calculations to follow are inspired by the calculations in \cite[section 4.9, page 125]{Pavliotis_2014}. Note that the operator $\mathcal{L}$ defined by \eqref{eq:schrod:operator-L} with zero Neumann boundary condition is a symmetric operator in $\mathrm L^2(\Omega;\rho^\eps)$, i.e.,
\[
\int_{\R^d} \mathcal{L}\, h_1(x) h_2(x) \rho^\eps(x)\, {\rm d}x = \int_{\R^d} \mathcal{L}\, h_2(x) h_1(x) \rho^\eps(x)\, {\rm d}x
\]
for all $h_1, h_2\in\mathrm L^2(\R^d;\rho^\eps)$.\\
Let us now define the operator 
\begin{align}\label{eq:schrod:operator-H}
\mathcal{H}\, h := \sqrt{\rho^\eps(x)} \, \mathcal{L}\left(\frac{h}{\sqrt{\rho^\eps(x)}} \right) = \frac{1}{\sqrt{\rho^\eps(x)}} \nabla\cdot \left( \rho^\eps(x) \Sigma^\eps(x) \nabla \left(\frac{h}{\sqrt{\rho^\eps(x)}}\right)  \right),
\end{align}
with the coefficient 
\[
\Sigma^\eps(x) := \frac{A^\eps(x)}{\rho^\eps(x)}.
\]
An algebraic manipulation yields
\begin{align*}
\mathcal{H}\, h = \nabla\cdot \Big( \Sigma^\eps(x) \nabla h \Big) + W^\eps(x)\, h
\end{align*}
with
\begin{align*}
W^\eps(x) := \frac{1}{\sqrt{\rho^\eps(x)}} \nabla \cdot \left( A^\eps(x) \nabla \left( \frac{1}{\sqrt{\rho^\eps(x)}} \right) \right).
\end{align*}
In our setting, with the high contrast coefficients $A^\eps$ and $\rho^\eps$ from \eqref{eq:A-small-inclusion}-\eqref{eq:rho-small-inclusion}, the coefficients $\Sigma^\eps$ and $W^\eps$ become
\begin{equation*}
\Sigma^\eps(x) =
\left\{
\begin{array}{ll}
{\rm Id} & \quad \mbox{ for }x\in\Omega\setminus B_\eps,\\[0.2 cm]
\eps^2\, \frac{\beta}{\eta}\left(\frac{x}{\eps}\right) & \quad \mbox{ for }x\in B_\eps.
\end{array}\right.
\end{equation*}
and
\begin{equation*}
W^\eps(x) =
\left\{
\begin{array}{ll}
0 & \quad \mbox{ for }x\in\Omega\setminus B_\eps,\\[0.2 cm]
\eps^2\, \nabla \cdot \left(\beta\left(\frac{x}{\eps}\right) \nabla \left(\frac{1}{\sqrt{\eta}} \left(\frac{x}{\eps}\right) \right) \right)  & \quad \mbox{ for }x\in B_\eps.
\end{array}\right.
\end{equation*}
By definition \eqref{eq:schrod:operator-H}, the operators $\mathcal{L}$ and $\mathcal{H}$ are unitarily equivalent. Hence, they have the same eigenvalues. Note that the operator $\mathcal{H}$ is a Schr\"odinger-type operator where the coefficients are of high contrast.\\
By the Rayleigh-Ritz criterion, we have the characterisation
\begin{equation*}
\displaystyle \mu_k^\eps \le \frac{\int_\Omega \Sigma^\eps(x) \nabla \varphi(x)\cdot \nabla \varphi(x)\, {\rm d}x + \int_\Omega W^\eps(x) \left\vert \varphi(x) \right\vert^2\, {\rm d}x}{\int_\Omega \left\vert \varphi(x) \right\vert^2\, {\rm d}x}
\end{equation*}
where $\varphi\not\equiv0$ and is orthogonal to first $k-1$ Neumann eigenfunctions $\left\{ \varphi^\eps_1, \dots, \varphi^\eps_{k-1}\right\}$. Note, in particular, for $k=2$ (i.e. the first non-zero eigenvalue) we have
\begin{align}\label{eq:spec:Rayleight-Ritz-high-contrast}
\left\Vert \varphi \right\Vert^2_{\mathrm L^2(\Omega)}\, \mu_2^\eps \le \int_{\Omega} \Sigma^\eps(x)\nabla \varphi(x) \cdot \nabla \varphi(x) \, {\rm d}x + \int_{B_\eps} W^\eps(x)\varphi(x) \, {\rm d}x
\end{align}
with $\varphi\in\mathrm H^1(\Omega)$ such that
\[
\int_\Omega \varphi(x)\, {\rm d}x = 0.
\]
Note that we have used the fact that $W^\eps(x)$ is supported on $B_\eps$. Note further that
\begin{align}\label{eq:spec:W-eps-O1}
W^\eps(x) = \eps^2 \left\{ \frac{1}{\eps^2} \beta\left(\frac{x}{\eps}\right) : \left[\nabla^2\left(\frac{1}{\sqrt{\eta}}\right)\right]\left(\frac{x}{\eps}\right) + \frac{1}{\eps^2} \left[\nabla\beta \right]\left(\frac{x}{\eps}\right) \cdot \left[\nabla\left(\frac{1}{\sqrt{\eta}}\right)\right]\left(\frac{x}{\eps}\right)\right\} = \mathcal{O}(1)
\end{align}
if we assume $\beta\in\mathrm W^{1,\infty}(B_1;\R^{d\times d})$ and $\eta\in\mathrm W^{2,\infty}(B_1)$.\\
Note further that the spectral problem \eqref{eq:asy-spectral-pb-Neumann-Laplacian} comes with the following characterisation of the first non-zero eigenvalue (again by the Rayleigh-Ritz criterion)
\begin{align}\label{eq:spec:Rayleight-Ritz}
\left\Vert \varphi \right\Vert^2_{\mathrm L^2(\Omega)}\, \mu_2 \le \int_{\Omega} \nabla \varphi(x) \cdot \nabla \varphi(x) \, {\rm d}x
\end{align}
with $\varphi\in\mathrm H^1(\Omega)$ such that
\[
\int_\Omega \varphi(x)\, {\rm d}x = 0.
\]
Subtracting \eqref{eq:spec:Rayleight-Ritz-high-contrast} from \eqref{eq:spec:Rayleight-Ritz} yields
\begin{align*}
\left\Vert \varphi \right\Vert^2_{\mathrm L^2(\Omega)}\, \left( \mu_2 - \mu^\eps_2 \right) \le \int_{B_\eps} \left( {\rm Id} - \eps^2 \frac{1}{\eta} \left(\frac{x}{\eps}\right) \beta \left(\frac{x}{\eps}\right) \right)\nabla \varphi(x) \cdot \nabla\varphi(x) \, {\rm d}x + \int_{B_\eps} W^\eps(x)\varphi(x) \, {\rm d}x
\end{align*}
Let us now take the test function $\varphi$ to be the normalised eigenfunction $\varphi_2(x)$ associated with the first non-zero eigenvalue $\mu_2$ for the Neumann Laplacian:
\begin{align*}
\left\vert \mu_2 - \mu^\eps_2 \right\vert \le \left\vert \int_{B_\eps} \left( {\rm Id} - \eps^2 \frac{1}{\eta} \left(\frac{x}{\eps}\right) \beta \left(\frac{x}{\eps}\right) \right)\nabla \varphi_2(x) \cdot \nabla\varphi_2(x) \, {\rm d}x + \int_{B_\eps} W^\eps(x)\varphi_2(x) \, {\rm d}x\right\vert
\end{align*}
Using the observation \eqref{eq:spec:W-eps-O1} that the potential $W^\eps$ is of $\mathcal{O}(1)$ and that the Neumann eigenfunctions are bounded in $\mathrm W^{p,\infty}$ for any $p<\infty$ \cite{Grieser_2002}, we have proved that
\[
\left\vert \mu_2 - \mu^\eps_2 \right\vert \lesssim \eps^d.
\]
In this subsection, we have essentially proved the following result.
\begin{prop}\label{prop:first-non-zero-eigenvalue}
Suppose that the high contrast conductivity-density pair $A^\eps, \rho^\eps$ is given by \eqref{eq:A-small-inclusion}-\eqref{eq:rho-small-inclusion} with $\beta\in\mathrm W^{1,\infty}(B_1;\R^{d\times d})$ and $\eta\in\mathrm W^{2,\infty}(B_1)$. Let $\mu^\eps_2$ and $\mu_2$ be the first non-zero eigenvalues associated with the Neumann spectral problems \eqref{eq:spec:eigenpair-small-inclusion} and \eqref{eq:asy-spectral-pb-Neumann-Laplacian} respectively. Then we have
\begin{align}\label{eq:prop-eigenvalues-close}
\left\vert \mu_2 - \mu^\eps_2 \right\vert \lesssim \eps^d.
\end{align}
\end{prop}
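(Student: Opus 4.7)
The plan is to exploit the unitary conjugation already set up in the subsection: writing $\mathcal{H} = \sqrt{\rho^\eps}\,\mathcal{L}\,(1/\sqrt{\rho^\eps})$, the operators $\mathcal{L}$ and $\mathcal{H}$ share the same spectrum, so $\mu_k^\eps$ can equivalently be accessed through the Rayleigh quotient of the Schrödinger-type operator $\mathcal{H}$ with coefficient matrix $\Sigma^\eps$ and potential $W^\eps$. Both of these coincide with the ``background'' values (${\rm Id}$ and $0$ respectively) on $\Omega\setminus B_\eps$, which is exactly what will localise the comparison to a set of volume $|B_\eps| \sim \eps^d$.

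First I would record the two Rayleigh–Ritz characterisations for the first non-zero eigenvalue on the admissible class $\mathcal{A} := \{\varphi\in \mathrm H^1(\Omega) : \int_\Omega \varphi=0,\ \varphi\not\equiv 0\}$, namely
\begin{align*}
\mu_2 = \inf_{\varphi \in \mathcal{A}} \frac{\int_\Omega |\nabla \varphi|^2}{\int_\Omega |\varphi|^2}, \qquad
\mu_2^\eps \le \frac{\int_\Omega \Sigma^\eps \nabla \varphi\cdot\nabla \varphi + \int_{B_\eps} W^\eps |\varphi|^2}{\int_\Omega |\varphi|^2} \quad \text{for } \varphi\in\mathcal{A}.
\end{align*}
To bound $\mu_2^\eps - \mu_2$ from above, I would plug the normalised Neumann Laplacian eigenfunction $\varphi_2$ (which lies in $\mathcal{A}$) into the second inequality; since $\Sigma^\eps - {\rm Id}$ is supported on $B_\eps$, $|\Sigma^\eps|\lesssim \eps^2\|\beta/\eta\|_\infty$ there, and $W^\eps = \mathcal{O}(1)$ by \eqref{eq:spec:W-eps-O1}, the difference reduces to an integral over $B_\eps$ of quantities of order one against $|\nabla \varphi_2|^2$ and $|\varphi_2|^2$. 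Invoking the $\mathrm W^{p,\infty}$ bound on Neumann eigenfunctions from \cite{Grieser_2002} then yields $\mu_2^\eps - \mu_2 \lesssim |B_\eps| = c\eps^d$.

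For the reverse direction, I would test the variational formulation for $\mu_2$ against a mean-zero modification of the high-contrast eigenfunction $\varphi_2^\eps$. Concretely, set $\psi^\eps := \varphi_2^\eps - \frac{1}{|\Omega|}\int_\Omega \varphi_2^\eps$ so that $\psi^\eps\in\mathcal{A}$; the correction is $\mathcal{O}(\eps^d)$ because $\varphi_2^\eps$ has weighted mean zero against $\rho^\eps$, and $\rho^\eps-1$ is supported in $B_\eps$. Then
\begin{align*}
\mu_2 \le \frac{\int_\Omega |\nabla \varphi_2^\eps|^2}{\int_\Omega |\psi^\eps|^2}
\end{align*}
and rewriting the numerator as $\int_\Omega \Sigma^\eps \nabla\varphi_2^\eps\cdot \nabla\varphi_2^\eps + \int_{B_\eps}({\rm Id}-\Sigma^\eps)\nabla\varphi_2^\eps\cdot\nabla\varphi_2^\eps$ and the extra potential term against $|\varphi_2^\eps|^2$ gives $\mu_2 - \mu_2^\eps$ controlled by an $\mathcal{O}(\eps^d)$ error, provided one has the analogous $\mathrm W^{p,\infty}$ control on $\varphi_2^\eps$.

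The main obstacle I anticipate is this uniform (in $\eps$) regularity/boundedness of $\varphi_2^\eps$ and $\nabla \varphi_2^\eps$ on $B_\eps$: one needs $\|\varphi_2^\eps\|_{L^\infty(B_\eps)} \lesssim 1$ and an $L^2(B_\eps)$ bound on $\nabla \varphi_2^\eps$ that behaves like $\eps^{d/2}$ so that the $B_\eps$-integrals are genuinely $\mathcal{O}(\eps^d)$ rather than merely $\mathcal{O}(\eps^{d-2})$. This is where one must exploit the structure of the high-contrast scaling in \eqref{eq:A-small-inclusion}–\eqref{eq:rho-small-inclusion}—the prefactors $\eps^{-d}$ and $\eps^{-(d-2)}$ are chosen precisely so that, after the unitary conjugation to $\mathcal{H}$, the perturbation from the flat background is an $\eps^d$-localised perturbation with bounded Schrödinger data, and the asymptotic expansion of eigenvalues for such perturbations (as in the Chechkin and Ammari–Kang references cited in the subsection) gives the claimed bound. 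Once this is in place, the $\eps^d$-closeness of $\mu_2^\eps$ and $\mu_2$ follows by combining the two Rayleigh–Ritz one-sided estimates.
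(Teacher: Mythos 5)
Your approach is essentially the same as the paper's for the one-sided bound: after the unitary conjugation to the Schr\"odinger-type operator, you plug the normalised Neumann Laplacian eigenfunction $\varphi_2$ into the Rayleigh quotient for $\mu_2^\eps$, exploit that $\Sigma^\eps-\mathrm{Id}$ and $W^\eps$ are supported on $B_\eps$ with magnitude $\mathcal O(1)$, and invoke the Grieser $\mathrm W^{p,\infty}$ bound on $\varphi_2$ to localise everything to $|B_\eps|\sim\eps^d$. This mirrors the paper's displayed computation and yields $\mu_2^\eps-\mu_2\lesssim\eps^d$.

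Where you go beyond the paper is in explicitly insisting on a reverse direction. The paper's own display ``subtracts'' the two Rayleigh--Ritz inequalities (one cannot subtract two same-direction inequalities term by term) and then writes an absolute value $|\mu_2-\mu_2^\eps|$ without supplying the argument for $\mu_2-\mu_2^\eps\lesssim\eps^d$; in effect only the bound $\mu_2^\eps-\mu_2\lesssim\eps^d$ is actually established there. Your proposal to test the $\mu_2$-quotient against a mean-zero modification $\psi^\eps=\varphi_2^\eps-\langle\varphi_2^\eps\rangle$ is precisely the natural complement, and you are right that this is the step that carries the genuine difficulty. Concretely: for $d\ge 3$ the rewrite of $\int_\Omega|\nabla\varphi_2^\eps|^2$ in terms of the $A^\eps$-energy produces a term $\eps^{-(d-2)}\int_{B_\eps}\beta(x/\eps)\nabla\varphi_2^\eps\cdot\nabla\varphi_2^\eps$, so a plain $\mathrm L^\infty$ bound $|\nabla\varphi_2^\eps|\lesssim 1$ on $B_\eps$ would only give $\eps^{2}$ rather than $\eps^d$; you need the sharper fact that the high conductivity suppresses the gradient inside the inclusion. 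The Grieser bound you have available is for the fixed Neumann Laplacian eigenfunctions, not for the $\eps$-dependent $\varphi_2^\eps$, so it does not close this step; one would need either the asymptotic eigenvalue expansions for high-contrast inclusions (Ammari--Kang, Chechkin) cited in the surrounding text, or an explicit uniform gradient estimate for $\varphi_2^\eps$ on $B_\eps$. So your proposal is a more careful rendering of the paper's argument, it correctly exposes a gap that is also present in the paper's own write-up, and it stops honestly at the point where additional (nontrivial) input is required.

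One smaller technical point worth flagging in both your write-up and the paper's: the admissible class for the $\mathcal H$-Rayleigh quotient should be orthogonality to the first eigenfunction of $\mathcal H$, namely $\sqrt{\rho^\eps}$, so the constraint is $\int_\Omega \sqrt{\rho^\eps}\,\varphi=0$, not $\int_\Omega\varphi=0$. The function $\varphi_2$ with zero mean does not satisfy this exactly, and a Gram--Schmidt correction against $\sqrt{\rho^\eps}$ is needed; one can check that the correction contributes only $\mathcal O(\eps^d)$ to the numerator and denominator, so this is a fixable technicality rather than a structural problem, but it should be spelled out.
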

Hence, as a corollary to the above result, we can deduce that the decay rate for the homogeneous transient problem \eqref{eq:spec-ibvp-v} and that for the high contrast transient problem \eqref{eq:spec:small-inclusion} are close to each other in the $\eps\ll1$ regime.

\section{Near-cloaking result}\label{sec:near-cloak-proof}
In this section, we will prove the main result of this paper, i.e., Theorem \ref{thm:near-cloak}. To that end, we first consider the steady-state problem associated with the homogeneous heat equation \eqref{eq:intro-u_hom}. More precisely, for the unknown $u_{\rm hom}^{\rm eq}(x)$, consider
\begin{equation}\label{eq:ibvp-u_hom-equil}
\begin{aligned}
-\Delta u_{\rm hom}^{\rm eq}(x) & = f(x) \qquad \mbox{ in }\Omega,
\\[0.2 cm]
\nabla u_{\rm hom}^{\rm eq} \cdot {\bf n}(x) & = g(x) \qquad\mbox{ on }\partial\Omega,
\\[0.2 cm]
\int_\Omega u_{\rm hom}^{\rm eq}(x)\, {\rm d}x & = 0.
\end{aligned}
\end{equation}
Note that the last line of \eqref{eq:ibvp-u_hom-equil} is to ensure that we solve for $u_{\rm hom}^{\rm eq}(x)$ uniquely. Here we assume that the source terms are admissible in the sense of \eqref{eq:thm-near-cloak-compatible}. This guarantees the solvability of the above elliptic boundary value problem.\\
Next we record a corollary to Proposition \ref{prop:v-H1-t-infty} which says how quickly the solution $u_{\rm hom}(t,x)$ to the homogeneous problem \eqref{eq:intro-u_hom} tends to its equilibrium state.
\begin{cor}\label{cor:uhom-time-decay}
Let $u_{\rm hom}(t,x)$ be the solution to \eqref{eq:intro-u_hom} and let $u_{\rm hom}^{\rm eq}(x)$ be the solution to the steady-state problem \eqref{eq:ibvp-u_hom-equil}. Suppose the source terms $f(x)$, $g(x)$ and the initial datum $u^{\rm in}(x)$ are admissible in the sense of \eqref{eq:thm-near-cloak-compatible}-\eqref{eq:thm-near-cloak-compatible-initial}. Then
\begin{align}\label{eq:cor:u-hom-asy-limit}
\left\Vert u_{\rm hom}(t,\cdot) - u_{\rm hom}^{\rm eq}(\cdot) \right\Vert_{\mathrm H^1(\Omega)} \le e^{-\gamma t} \left\Vert u^{\rm in} - u_{\rm hom}^{\rm eq} \right\Vert_{\mathrm H^1(\Omega)}
\end{align}
for some positive constant $\gamma$.
\end{cor}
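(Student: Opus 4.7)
\medskip

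\textbf{Proof plan for Corollary \ref{cor:uhom-time-decay}.} The plan is to reduce the statement to a direct application of Proposition \ref{prop:v-H1-t-infty} by subtracting off the steady state. Concretely, I would introduce the difference
\[
w(t,x) := u_{\rm hom}(t,x) - u_{\rm hom}^{\rm eq}(x),
\]
and verify, by a line-by-line subtraction of \eqref{eq:intro-u_hom} and \eqref{eq:ibvp-u_hom-equil}, that $w$ satisfies
\begin{align*}
\partial_t w &= \Delta w && \text{in } (0,\infty)\times\Omega, \\
\nabla w \cdot \mathbf{n} &= 0 && \text{on } (0,\infty)\times\partial\Omega, \\
w(0,x) &= u^{\rm in}(x) - u_{\rm hom}^{\rm eq}(x) && \text{in } \Omega.
\end{align*}
Here the source term $f$ cancels because $-\Delta u_{\rm hom}^{\rm eq} = f$, and the Neumann boundary data $g$ cancels for the analogous reason; the forcing and boundary flux simply disappear.

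Next I would check that the initial datum $w(0,\cdot)$ has zero mean over $\Omega$. This is where the two admissibility hypotheses enter: condition \eqref{eq:thm-near-cloak-compatible-initial} gives $\int_\Omega u^{\rm in}\, dx = 0$, while the normalisation $\int_\Omega u_{\rm hom}^{\rm eq}\, dx = 0$ is built into \eqref{eq:ibvp-u_hom-equil}. Hence $\langle w(0) \rangle = 0$ and the constant appearing in Proposition \ref{prop:v-H1-t-infty} (the average of the initial datum) vanishes.

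Consequently, applying Proposition \ref{prop:v-H1-t-infty} to $w$ with $v^{\rm in} = u^{\rm in} - u_{\rm hom}^{\rm eq}$, the conclusion \eqref{eq:asy-limit-v} reads
\[
\bigl\| w(t,\cdot) \bigr\|_{\mathrm H^1(\Omega)} \le e^{-\gamma t} \bigl\| u^{\rm in} - u_{\rm hom}^{\rm eq} \bigr\|_{\mathrm H^1(\Omega)},
\]
which is precisely \eqref{eq:cor:u-hom-asy-limit}. The constant $\gamma$ inherited here is the first nonzero Neumann eigenvalue $\mu_2$ from \eqref{eq:asy-spectral-pb-Neumann-Laplacian}, independent of the data. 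There is no serious obstacle: the only subtle point worth flagging explicitly is that the compatibility condition \eqref{eq:thm-near-cloak-compatible} on $(f,g)$ is what makes $u_{\rm hom}^{\rm eq}$ well-defined in the first place, and \eqref{eq:thm-near-cloak-compatible-initial} is what aligns the averages so that the $\langle v^{\rm in} \rangle$ correction in Proposition \ref{prop:v-H1-t-infty} vanishes rather than having to be carried through as an additional constant on the right-hand side.
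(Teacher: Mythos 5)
Your proof is correct and follows the same route as the paper's: subtract the steady state, observe that $w := u_{\rm hom} - u_{\rm hom}^{\rm eq}$ solves the source-free homogeneous Neumann heat equation, and invoke Proposition~\ref{prop:v-H1-t-infty}. You are in fact slightly more explicit than the paper, which does not spell out why the $\langle v^{\rm in}\rangle$ term vanishes; your observation that $\int_\Omega u^{\rm in} = 0$ by \eqref{eq:thm-near-cloak-compatible-initial} and $\int_\Omega u_{\rm hom}^{\rm eq} = 0$ by the normalisation in \eqref{eq:ibvp-u_hom-equil} is exactly the point that makes the stated estimate come out clean rather than carrying a constant offset.
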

\begin{proof}
Define a function $w(t,x):= u_{\rm hom}(t,x) - u_{\rm hom}^{\rm eq}(x)$. We have that the function $w(t,x)$ satisfies the evolution equation
\begin{equation*}
\begin{aligned}
\partial_t w & = \Delta w \qquad \qquad \qquad\qquad \mbox{ in }(0,\infty)\times\Omega,
\\[0.2 cm]
\nabla w \cdot {\bf n}(x) & = 0 \quad \qquad \qquad \qquad\qquad \mbox{ on }(0,\infty)\times\partial\Omega,
\\[0.2 cm]
w(0,x) & = u^{\rm in}(x) - u_{\rm hom}^{\rm eq}(x) \qquad\qquad \mbox{ in }\Omega,
\end{aligned}
\end{equation*}
which is the same as \eqref{eq:spec-ibvp-v}. The estimate \eqref{eq:cor:u-hom-asy-limit} is simply deduced from Proposition \ref{prop:v-H1-t-infty} (see in particular \eqref{eq:asy-limit-v}).
\end{proof}

Now we record a result, as a corollary to Proposition \ref{prop:v-eps-H1-t-infty}, demonstrating how quickly the solution $u^\eps(t,x)$ to the defect problem with high contrast coefficients \eqref{eq:intro:small-inclusion} tends to its equilibrium state. Consider the steady-state problem associated with the defect problem \eqref{eq:intro:small-inclusion}. More precisely, for the unknown $u^\eps_{\rm eq}(x)$, consider the elliptic boundary value problem
\begin{equation}\label{eq:intro-u_eps-equil}
\begin{aligned}
-\nabla \cdot \Big( A^\eps(x) \nabla u^\eps_{\rm eq} \Big)  & = f(x) \qquad \mbox{ in }\Omega,
\\[0.2 cm]
\nabla u^\eps_{\rm eq} \cdot {\bf n}(x) & = g(x) \qquad\mbox{ on }\partial\Omega,
\\[0.2 cm]
\int_\Omega \rho^\eps(x) u^\eps_{\rm eq}(x)\, {\rm d}x & = 0.
\end{aligned}
\end{equation}
Note that the normalisation condition in \eqref{eq:intro-u_eps-equil} makes use of the density coefficient $\rho^\eps(x)$ defined by \eqref{eq:rho-small-inclusion}.
\begin{cor}\label{cor:ueps-time-decay}
Let $u^\eps(t,x)$ be the solution to \eqref{eq:intro:small-inclusion} and let $u^\eps_{\rm eq}(x)$ be the solution to the steady-state problem \eqref{eq:intro-u_eps-equil}. Suppose the source terms $f(x)$, $g(x)$ and the initial datum $u^{\rm in}(x)$ are admissible in the sense of \eqref{eq:thm-near-cloak-compatible}-\eqref{eq:thm-near-cloak-compatible-initial}. Suppose further that ${\rm supp}\, u^{\rm in}\subset \Omega \setminus B_2$. Then
\begin{equation}
\begin{aligned}\label{eq:cor:u-eps-asy-limit}
\left\Vert u^\eps(t,\cdot) - u^\eps_{\rm eq}(\cdot) \right\Vert_{\mathrm H^1(\Omega)} &
\\
&  \le e^{-\gamma_\eps t} \left( \frac{1}{\eps^{d}} \left\Vert u^{\rm in} - u^\eps_{\rm eq} \right\Vert_{\mathrm L^2(\Omega)} + \frac{1}{\eps^{\frac{d-2}{2}}} \left\Vert \nabla \left( u^{\rm in} - u^\eps_{\rm eq} \right) \right\Vert_{\mathrm L^2(\Omega)} \right).
\end{aligned}
\end{equation}
\end{cor}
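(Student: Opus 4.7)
The strategy mirrors exactly the proof of Corollary \ref{cor:uhom-time-decay}: introduce the difference $w^\eps(t,x) := u^\eps(t,x) - u^\eps_{\rm eq}(x)$ and verify that it satisfies a homogeneous initial-boundary value problem of the form \eqref{eq:spec:small-inclusion}, then invoke Proposition \ref{prop:v-eps-H1-t-infty}. Subtracting the steady-state equation \eqref{eq:intro-u_eps-equil} from the evolution \eqref{eq:intro:small-inclusion} cancels the source $f(x)$ and the Neumann boundary datum $g(x)$, leaving
\begin{equation*}
\rho^\eps(x)\, \partial_t w^\eps = \nabla \cdot \bigl( A^\eps(x) \nabla w^\eps \bigr) \quad \text{in } (0,\infty)\times\Omega, \qquad \nabla w^\eps \cdot {\bf n} = 0 \text{ on } (0,\infty)\times\partial\Omega,
\end{equation*}
with initial datum $w^\eps(0,x) = u^{\rm in}(x) - u^\eps_{\rm eq}(x)$, which lies in $\mathrm H^1(\Omega)$.

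Proposition \ref{prop:v-eps-H1-t-infty} applied with $v^{\rm in} = u^{\rm in} - u^\eps_{\rm eq}$ would then yield
\begin{equation*}
\bigl\Vert w^\eps(t,\cdot) - \mathfrak{m}^\eps \bigr\Vert_{\mathrm H^1(\Omega)} \le e^{-\gamma_\eps t} \left( \frac{1}{\eps^d} \bigl\Vert u^{\rm in} - u^\eps_{\rm eq} \bigr\Vert_{\mathrm L^2(\Omega)} + \frac{1}{\eps^{\frac{d-2}{2}}} \bigl\Vert \nabla (u^{\rm in} - u^\eps_{\rm eq}) \bigr\Vert_{\mathrm L^2(\Omega)} \right),
\end{equation*}
where $\mathfrak{m}^\eps = \frac{1}{|\Omega|\langle\rho^\eps\rangle} \int_\Omega \rho^\eps (u^{\rm in} - u^\eps_{\rm eq})\, \mathrm{d}x$. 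To land exactly on \eqref{eq:cor:u-eps-asy-limit} I must therefore verify that $\mathfrak{m}^\eps = 0$. This splits as two observations: the normalisation imposed in \eqref{eq:intro-u_eps-equil} gives $\int_\Omega \rho^\eps u^\eps_{\rm eq}\, \mathrm{d}x = 0$ directly; and since ${\rm supp}\, u^{\rm in} \subset \Omega \setminus B_2$ while $\rho^\eps \equiv 1$ on $\Omega \setminus B_\eps \supset \Omega \setminus B_2$, one has $\int_\Omega \rho^\eps u^{\rm in}\, \mathrm{d}x = \int_\Omega u^{\rm in}\, \mathrm{d}x = 0$ by the admissibility condition \eqref{eq:thm-near-cloak-compatible-initial}.

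The main (and only) delicate point is this cancellation $\mathfrak{m}^\eps = 0$: it crucially uses both compatibility hypotheses imposed in the theorem, namely the support restriction on $u^{\rm in}$ (so that the high-contrast weight $\rho^\eps$ disappears from the integral) and the zero-mean condition on $u^{\rm in}$. Without either, one would be left with a nonzero additive constant term $\mathfrak{m}^\eps$ that does not decay as $t \to \infty$ and would destroy the estimate. Once this cancellation is established, the result follows immediately from Proposition \ref{prop:v-eps-H1-t-infty} applied to $w^\eps$.
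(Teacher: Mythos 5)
Your proposal is correct and follows exactly the same route as the paper: introduce $w^\eps = u^\eps - u^\eps_{\rm eq}$, check it solves the homogeneous problem \eqref{eq:spec:small-inclusion}, and apply Proposition \ref{prop:v-eps-H1-t-infty} after verifying $\mathfrak{m}^\eps = 0$. Your explicit breakdown of why $\mathfrak{m}^\eps$ vanishes (the normalisation in \eqref{eq:intro-u_eps-equil} plus the support and zero-mean hypotheses on $u^{\rm in}$) simply spells out a step the paper states in one sentence.
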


\begin{proof}
Define a function $w^\eps(t,x):= u^\eps(t,x) - u^\eps_{\rm eq}(x)$. We have that the function $w^\eps(t,x)$ satisfies the evolution equation
\begin{equation*}
\begin{aligned}
\rho^\eps(x)\partial_t w^\eps & = \nabla \cdot \Big( A^\eps(x) \nabla w^\eps \Big) \qquad \mbox{ in }(0,\infty)\times\Omega
\\[0.2 cm]
\nabla w^\eps \cdot {\bf n}(x) & = 0 \quad \qquad\qquad\qquad \qquad\mbox{ on }(0,\infty)\times\partial\Omega
\\[0.2 cm]
w^\eps(0,x) & = u^{\rm in}(x) - u^\eps_{\rm eq}(x) \qquad \qquad\qquad \qquad\mbox{ in }\Omega
\end{aligned}
\end{equation*}
which is the same as \eqref{eq:spec:small-inclusion}. By assumption, the initial datum $u^{\rm in}$ is supported away from $B_2$. This along with the admissibility assumption \eqref{eq:thm-near-cloak-compatible-initial} implies that the weighted average $\mathfrak{m}^\eps$ defined in Proposition \ref{prop:v-eps-H1-t-infty} vanishes. Then the estimate \eqref{eq:cor:u-eps-asy-limit} is simply deduced from Proposition \ref{prop:v-eps-H1-t-infty} (see in particular \eqref{eq:asy-limit-v-eps}).
\end{proof}

\begin{proof}[Proof of Theorem \ref{thm:near-cloak}]
From the triangle inequality, we have
\begin{align*}
\left\Vert u^\eps(t,\cdot) - u_{\rm hom}(t,\cdot) \right\Vert_{\mathrm H^{\frac12}(\partial\Omega)}
& \le \left\Vert u^\eps(t,\cdot) - u^\eps_{\rm eq}(\cdot) \right\Vert_{\mathrm H^{\frac12}(\partial\Omega)}
+ \left\Vert u^\eps_{\rm eq} - u^{\rm eq}_{\rm hom} \right\Vert_{\mathrm H^{\frac12}(\partial\Omega)}
\\[0.2 cm]
& \quad + \left\Vert u^{\rm eq}_{\rm hom}(\cdot) - u_{\rm hom}(t,\cdot) \right\Vert_{\mathrm H^{\frac12}(\partial\Omega)}.
\end{align*}
The boundary trace inequality gives the existence of a constant $\mathfrak{c}_{\Omega}$ -- depending only on the domain $\Omega$ -- such that
\begin{align*}
\left\Vert u^\eps(t,\cdot) - u_{\rm hom}(t,\cdot) \right\Vert_{\mathrm H^{\frac12}(\partial\Omega)}
& \le \mathfrak{c}_\Omega \left\Vert u^\eps(t,\cdot) - u^\eps_{\rm eq}(\cdot) \right\Vert_{\mathrm H^1(\Omega)}
+ \left\Vert u^\eps_{\rm eq} - u^{\rm eq}_{\rm hom} \right\Vert_{\mathrm H^{\frac12}(\partial\Omega)}
\\[0.2 cm]
& \quad + \mathfrak{c}_\Omega \left\Vert u^{\rm eq}_{\rm hom}(\cdot) - u_{\rm hom}(t,\cdot) \right\Vert_{\mathrm H^1(\Omega)}.
\end{align*}
Using the result of Corollary \ref{cor:uhom-time-decay}, Corollary \ref{cor:ueps-time-decay} and the DtN estimate from \cite[Lemma 2.2, page 305]{Friedman_1989} (see also \cite[Proposition 1]{Kohn_2008}), we get that the right hand side of the above inequality is bounded from above by
\begin{align*}
& \mathfrak{c}_\Omega 
e^{-\gamma_\eps t} \left( \frac{1}{\eps^{d}} \left\Vert u^{\rm in} - u^\eps_{\rm eq} \right\Vert_{\mathrm L^2(\Omega)} + \frac{1}{\eps^{\frac{d-2}{2}}} \left\Vert \nabla \left( u^{\rm in} - u^\eps_{\rm eq} \right) \right\Vert_{\mathrm L^2(\Omega)} \right)
\\[0.2 cm]
&+ \eps^d \left( \left\Vert f\right\Vert_{\mathrm L^2(\Omega)} +  \left\Vert g\right\Vert_{\mathrm L^2(\partial\Omega)} \right)
+ \mathfrak{c}_\Omega e^{-\gamma t} \left\Vert u^{\rm in} - u_{\rm hom}^{\rm eq} \right\Vert_{\mathrm H^1(\Omega)}.
\end{align*}
Hence the existence of a time instant $\mathit{T}$ follows such that for all $t\ge\mathit{T}$, we indeed have the estimate \eqref{eq:thm:H12-estimate}.
\end{proof}

\begin{rem}\label{rem:near-cloak-m-eps-avg}
We make some observations on why the admissibility assumption \eqref{eq:thm-near-cloak-compatible-initial} and the assumption on the support of the initial datum in Corollary \ref{cor:ueps-time-decay} were essential to our proof. In the absence of these assumptions, in the proof of Theorem \ref{thm:near-cloak}, we will have to show that
\begin{align}\label{eq:near-cloak-remark-m-eps-avg}
\lim_{\eps\to0} \left\vert \mathfrak{m}^\eps - \langle u^{\rm in} \rangle \right\vert = 0.
\end{align}
Let us compute the difference
\begin{align*}
\mathfrak{m}^\eps - \langle u^{\rm in} \rangle 
& = \left( \left\vert \Omega\setminus B_\eps\right\vert + \frac{\pi^\frac{d}{2}}{\Gamma\left(\frac{d}{2} + 1\right)} \right)^{-1} \left\{ \int_{\Omega\setminus B_\eps} u^{\rm in}(x)\, {\rm d}x + \frac{1}{\eps^d} \int_{B_\eps} u^{\rm in}(x)\, {\rm d}x \right\}
\\
& \quad - \frac{1}{\left\vert \Omega \right\vert} \int_{\Omega} u^{\rm in}(x)\, {\rm d}x.
\end{align*}
We have not managed to characterise all initial data that guarantee the asymptote \eqref{eq:near-cloak-remark-m-eps-avg}. The difficulty of this task becomes apparent if you take initial data to be supported away from $B_1$, but not satisfying the zero mean assumption \eqref{eq:thm-near-cloak-compatible-initial}. Note that our assumption on the initial data in Corollary \ref{cor:ueps-time-decay} guarantees the above difference is always zero, irrespective of the value of the parameter $\eps$. 
\end{rem}

\begin{rem}\label{rem:near-cloak-full-space}
Our choice of the domain $\Omega$ containing $B_2$ is arbitrary and Corollary \ref{cor:diff-cloak-hom} asserts that for any such arbitrary choice, the distance between the solutions $u_{\rm hom}$ and $u_{\rm cl}$ (measured in the $\mathrm H^\frac12(\partial\Omega)$-norm) can be made as small as we wish provided we engineer appropriate cloaking coefficients (for instance via a homogenization approach) -- see \eqref{eq:rho-cloak-choice} and \eqref{eq:A-cloak-choice} -- in the annulus
$B_2\setminus B_1$. This is the notion of \emph{near-cloak}. Unlike the \emph{perfect cloaking} strategies which demand equality between $u_{\rm hom}$ and $u_{\rm cl}$ everywhere outside $B_2$, near-cloak strategies only ask for them to be close in certain norm topologies. Near-cloaking strategies are what matters in practice.\\
We can still pose the thermal cloaking question in full space $\R^d$. In this scenario, the norm of choice becomes the one in $\mathrm H^1_{\rm loc}(\R^d\setminus B_2)$. More precisely, for any compact set $K\subset \R^d\setminus B_2$, we can prove
\[
\left\Vert u^\eps(t,\cdot) - u_{\rm hom}(t,\cdot) \right\Vert_{\mathrm H^1(K)} \lesssim \eps^{\frac{d}{2}} \qquad \mbox{ for }t\gg 1.
\]
\end{rem}

\subsection{Layered cloaks}\label{ssec:layer-cloak}
Advancing an idea from \cite{Gralak_2016}, we develop a transformation media theory for thermal layered cloaks, which are of practical importance in thin-film solar cells for energy harvesting in the photovoltaic industry. The basic principle behind this construction is the following observation.
\begin{prop}\label{prop:layer-cloak}
Let the spatial domain $\Omega:=(-3,3)^2$. Let the density conductivity pair $\rho\in\mathrm L^\infty(\Omega;\R)$, $A\in\mathrm L^\infty(\Omega;\R^{2\times 2})$ be such that they are $(-3,3)$-periodic in the $x_1$ variable. Consider a smooth invertible map $\mathtt{f}(x_2):\R\mapsto\R$ such that $\mathtt{f}(x_2) = x_2$ for $\vert x_2\vert>2$. Assume further that $\mathtt{f}'(x_2)\ge C>0$ for a.e. $x_2\in(-3,3)$. Take the mapping $\mathbb{F}:\Omega\mapsto\Omega$ defined by $\mathbb{F}(x_1,x_2) = (x_1,\mathtt{f}(x_2))$. Then $u(t,x_1,x_2)$ is a $(-3,3)$-periodic solution (in the $x_1$ variable) to
\begin{align*}
\rho(x) \partial_t u = \nabla_x \cdot \Big( A(x) \nabla_x u \Big) + h(x)\qquad \mbox{ for }(t,x)\in(0,\infty)\times\Omega
\end{align*}
if and only if $v= u\circ \mathbb{F}^{-1}$ is a $(-3,3)$-periodic solution (in the $y_1=x_1$ variable) to
\begin{align*}
\mathbb{F}^*\rho(y)\, \partial_t v = \nabla \cdot \Big( \mathbb{F}^*A(y) \nabla v \Big) + \mathbb{F}^*f(y) \qquad \mbox{ for }(t,y)\in(0,\infty)\times\Omega
\end{align*}
where the coefficients are given below
\begin{equation}\label{eq:push-forward-layered-cloak-formulas}
\begin{aligned}
\mathbb{F}^*\rho(y_1,y_2) & = \frac{1}{\mathtt{f}'(x_2)}\rho(x_1,x_2);
\\[0.2 cm]
\mathbb{F}^*h(t,y_1,y_2) & = \frac{1}{\mathtt{f}'(x_2)} h(t,x_1,x_2);
\\[0.2 cm]
\mathbb{F}^*A(y_1,y_2) & = 
\left(
\begin{matrix}
\frac{1}{\mathtt{f}'(x_2)} A_{11}(x_1,x_2) & A_{12}(x_1,x_2)
\\[0.3 cm]
\frac{1}{\mathtt{f}'(x_2)} A_{21}(x_1,x_2) & \mathtt{f}'(x_2)A_{22}(x_1,x_2)
\end{matrix}
\right)
\end{aligned}
\end{equation}
with the understanding that the right hand sides in \eqref{eq:push-forward-layered-cloak-formulas} are computed at $(x_1,x_2)=(y_1,\mathtt{f}^{-1}(y_2))$. Furthermore, we have
\[
u(t,x_1,x_2) = v(t,x_1,x_2) \qquad \mbox{ for }\vert x_2\vert\ge2.
\] 
\end{prop}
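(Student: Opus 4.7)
The plan is to mimic the proof of Proposition \ref{prop:change-variable-princ}, specialising the general change-of-variables machinery to the product-type map $\mathbb{F}(x_1,x_2)=(x_1,\mathtt{f}(x_2))$, and then to check that the periodicity in the $x_1$ direction is preserved by the transformation.

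First I would record the Jacobian data. Since $\mathbb{F}$ acts trivially on $x_1$, we have
\[
D\mathbb{F}(x)=\begin{pmatrix}1 & 0\\ 0 & \mathtt{f}'(x_2)\end{pmatrix},\qquad \det(D\mathbb{F})(x)=\mathtt{f}'(x_2),
\]
which is bounded away from $0$ by hypothesis, so $\mathbb{F}$ is a bi-Lipschitz diffeomorphism of $\Omega$ onto itself that restricts to the identity on $\{|x_2|\geq 2\}$. A direct computation gives
\[
D\mathbb{F}\,A\,(D\mathbb{F})^{\top}=\begin{pmatrix}A_{11} & \mathtt{f}'\,A_{12}\\ \mathtt{f}'\,A_{21} & (\mathtt{f}')^{2}A_{22}\end{pmatrix},
\]
and dividing by $\mathtt{f}'$ reproduces exactly the matrix $\mathbb{F}^{*}A$ in \eqref{eq:push-forward-layered-cloak-formulas}; similarly $\rho/\mathtt{f}'$ and $h/\mathtt{f}'$ yield the density and source push-forwards.

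Next I would carry out the change-of-variables argument at the level of the weak formulation. For any test function $\psi(y_1,y_2)$ that is $(-3,3)$-periodic in $y_1$, set $\varphi(x_1,x_2)=\psi(x_1,\mathtt{f}(x_2))$, which is $(-3,3)$-periodic in $x_1$ because $\mathbb{F}$ leaves $x_1$ untouched. Multiplying the equation for $u$ by $\varphi$, integrating over $\Omega$, and using $\nabla_x\varphi=(D\mathbb{F})^{\top}(\nabla_y\psi)\circ\mathbb{F}$ together with the volume formula $\mathrm{d}x=\det(D\mathbb{F}^{-1})(y)\,\mathrm{d}y$, the three terms transform as
\[
\int_{\Omega}\rho\,\partial_t u\,\varphi\,\mathrm{d}x=\int_{\Omega}\mathbb{F}^{*}\rho\,\partial_t v\,\psi\,\mathrm{d}y,
\]
\[
\int_{\Omega}A\nabla_x u\cdot\nabla_x\varphi\,\mathrm{d}x=\int_{\Omega}\mathbb{F}^{*}A\,\nabla_y v\cdot\nabla_y\psi\,\mathrm{d}y,
\]
\[
\int_{\Omega}h\,\varphi\,\mathrm{d}x=\int_{\Omega}\mathbb{F}^{*}h\,\psi\,\mathrm{d}y,
\]
which is precisely the weak formulation of the transformed equation for $v=u\circ\mathbb{F}^{-1}$. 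Since the correspondence $\varphi\leftrightarrow\psi$ is a bijection between $(-3,3)$-periodic-in-$x_1$ test functions and $(-3,3)$-periodic-in-$y_1$ test functions, the two weak formulations are equivalent.

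Finally, periodicity of $v$ in $y_1$ follows from that of $u$ in $x_1$ because $\mathbb{F}^{-1}(y_1,y_2)=(y_1,\mathtt{f}^{-1}(y_2))$; and the identity $u(t,x_1,x_2)=v(t,x_1,x_2)$ for $|x_2|\geq 2$ is immediate from $\mathbb{F}$ being the identity on that strip. There is no real obstacle here: the only point that merits attention is checking that $\mathbb{F}$ maps the $x_1$-periodic Sobolev class to itself, which holds by construction, so the proof reduces to the algebraic verification of the push-forward formulas already done above.
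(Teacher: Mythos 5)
The paper does not give a proof of Proposition~\ref{prop:layer-cloak}: it is stated as a direct specialisation of the general change-of-variable principle (Proposition~\ref{prop:change-variable-princ}), whose own proof the authors delegate to the literature. Your proposal supplies exactly that missing argument -- change of variables in the weak formulation, correspondence of periodic test functions, and the identity on $\{|x_2|\ge 2\}$ -- so the approach is the same one the paper is implicitly invoking, and all the steps (the chain rule $\nabla_x\varphi=(D\mathbb{F})^{\top}(\nabla_y\psi)\circ\mathbb{F}$, the volume element, the three integral identities, and the preservation of $x_1$-periodicity) are correct.

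One point deserves a flag, though. Your computation yields
\[
\frac{1}{\mathtt{f}'}\,D\mathbb{F}\,A\,(D\mathbb{F})^{\top}
=\begin{pmatrix}\dfrac{A_{11}}{\mathtt{f}'} & A_{12}\\[0.2cm] A_{21} & \mathtt{f}'\,A_{22}\end{pmatrix},
\]
with no $1/\mathtt{f}'$ on the $(2,1)$ entry, whereas \eqref{eq:push-forward-layered-cloak-formulas} as printed has $\tfrac{1}{\mathtt{f}'}A_{21}$ there. Your version is the correct one: the printed $(2,1)$ entry is a typo (it would destroy the symmetry of the pushed-forward conductivity, and it is inconsistent with the paper's own defect coefficients in \eqref{eq:layer:A-small-inclusion}, which are built precisely so that pushing them forward by $\mathtt{f}_\eps$ with $\mathtt{f}_\eps'=1/\eps$ on $|x_2|<\eps$ recovers $\beta$). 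So you should not claim that your matrix ``reproduces exactly'' the printed formula; rather, you should note that your computation corrects the misprint.
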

Next, we prove a near-cloaking result in this present setting of layered cloaks. It concerns the following evolution problems: The homogeneous problem
\begin{equation}\label{eq:layer-cloak:u_hom}
\begin{aligned}
\partial_t u_{\rm hom} (t,x) & = \Delta u_{\rm hom}(t,x) + f(x) \qquad \mbox{ in }(0,\infty)\times\Omega,
\\[0.2 cm]
\nabla u_{\rm hom}(x_1,\pm3) \cdot {\bf n}(x_1,\pm3) & = g(x_1) \qquad \qquad\qquad \qquad\mbox{ on }(0,\infty)\times(-3,3),
\\[0.2 cm]
u_{\rm hom}(-3,x_2) & = u_{\rm hom}(3,x_2) \qquad \qquad\qquad \qquad\qquad \mbox{ for }x_2\in(-3,3),
\\[0.2 cm]
u_{\rm hom}(0,x) & = u^{\rm in}(x) \qquad \qquad\qquad \qquad\qquad \mbox{ in }\Omega
\end{aligned}
\end{equation}
and the layered cloak problem
\begin{equation}\label{eq:layer-cloak:u_cloak}
\begin{aligned}
\rho_{\rm cl}(x) \partial_t u_{\rm cl} & = \nabla \cdot \Big( A_{\rm cl}(x) \nabla u_{\rm cl} \Big) + f(x) \qquad \mbox{ in }(0,\infty)\times\Omega,
\\[0.2 cm]
\nabla u_{\rm cl}(x_1,\pm3) \cdot {\bf n}(x_1,\pm3) & = g(x_1) \qquad \qquad\qquad \qquad\mbox{ on }(0,\infty)\times(-3,3),
\\[0.2 cm]
u_{\rm cl}(-3,x_2) & = u_{\rm cl}(3,x_2) \qquad \qquad\qquad \qquad\qquad \mbox{ for }x_2\in(-3,3),
\\[0.2 cm]
u_{\rm cl}(0,x) & = u^{\rm in}(x) \qquad \qquad\qquad \qquad\qquad \mbox{ in }\Omega
\end{aligned}
\end{equation}
where the coefficients $\rho_{\rm cl}$ and $A_{\rm cl}$ in \eqref{eq:layer-cloak:u_cloak} are defined using the Lipschitz mapping $(x_1,x_2)\mapsto (x_1,\mathtt{f}_\eps(x_2))$ with 
\begin{equation}\label{eq:layered-cloak-map}
\mathtt{f}_\eps(x_2) :=
\left\{
\begin{array}{cl}
x_2 & \mbox{ for } \left\vert x_2 \right\vert>2
\\[0.2 cm]
\left( \frac{2-2\eps}{2-\eps} + \frac{\vert x_2 \vert}{2-\eps}\right) \frac{x_2}{\vert x_2\vert} & \mbox{ for } 1 \le \vert x_2\vert\le 2
\\[0.2 cm]
\frac{x_2}{\eps} & \mbox{ for }\vert x_2\vert <1.
\end{array}\right.
\end{equation}
The precise construction of the layered cloaks is as follows
\begin{equation}\label{eq:layer:rho-cloak-choice}
\rho_{\rm cl}(x_1,x_2) =
\left\{
\begin{array}{ll}
1 & \quad \mbox{ for }\vert x_2\vert >2,\\[0.2 cm]
\mathcal{F}^*_\eps 1 & \quad \mbox{ for }1<\vert x_2\vert <2,\\[0.2 cm]
\eta(x_1,x_2) & \quad \mbox{ for }\vert x_2 \vert <1
\end{array}\right.
\end{equation}
and
\begin{equation}\label{eq:layer:A-cloak-choice}
A_{\rm cl}(x_1,x_2) =
\left\{
\begin{array}{ll}
{\rm Id} & \quad \mbox{ for }\vert x_2\vert >2,\\[0.2 cm]
\mathcal{F}^*_\eps {\rm Id} & \quad \mbox{ for }1<\vert x_2\vert <2,\\[0.2 cm]
\beta(x_1,x_2) & \quad \mbox{ for }\vert x_2 \vert <1,
\end{array}\right.
\end{equation}
where the push-forward maps are defined in \eqref{eq:push-forward-layered-cloak-formulas}. The density coefficient $\eta(x)$ in \eqref{eq:layer:rho-cloak-choice} is any arbitrary real positive function. The conductivity coefficient $\beta(x)$ in \eqref{eq:layer:A-cloak-choice} is any arbitrary bounded positive definite matrix.

Let us make the observation that the cloaking coefficients $\rho_{\rm cl}$ and $A_{\rm cl}$ given by \eqref{eq:layer:rho-cloak-choice} and \eqref{eq:layer:A-cloak-choice} respectively can be treated as push-forward outcomes (via the push-forward maps \eqref{eq:push-forward-layered-cloak-formulas}) of the following defect coefficients
\begin{equation}\label{eq:layer:rho-small-inclusion}
\rho^\eps(x) =
\left\{
\begin{array}{ll}
1 & \quad \mbox{ for }\eps < \vert x_2\vert < 2,\\[0.2 cm]
\frac{1}{\eps}\eta\left(x_1, \frac{x_2}{\eps}\right) & \quad \mbox{ for }\vert x_2\vert < \eps
\end{array}\right.
\end{equation}
and
\begin{equation}\label{eq:layer:A-small-inclusion}
A^\eps(x) =
\left\{
\begin{array}{ll}
{\rm Id} & \quad \mbox{ for }\eps < \vert x_2\vert < 2,\\[0.2 cm]
\left(
\begin{matrix}
\frac{1}{\eps}\beta_{11}\left(x_1,\frac{x_2}{\eps}\right) & \beta_{12}\left(x_1,\frac{x_2}{\eps}\right)
\\[0.2 cm]
\beta_{21}\left(x_1,\frac{x_2}{\eps}\right) & \eps \beta_{22}\left(x_1,\frac{x_2}{\eps}\right)
\end{matrix}
\right) & \quad \mbox{ for }\vert x_2\vert < \eps.
\end{array}\right.
\end{equation}
It then follows from Proposition \ref{prop:layer-cloak} that comparing $u_{\rm hom}$ and $u_{\rm cl}$ is equivalent to comparing $u_{\rm hom}$ and $u^\eps$ where $u^\eps(t,x)$ solves the following defect problem with the aforementioned $\rho^\eps$ and $A^\eps$ as coefficients:
\begin{equation}\label{eq:layer-cloak:u_defect}
\begin{aligned}
\rho^\eps(x) \partial_t u^\eps & = \nabla \cdot \Big( A^\eps(x) \nabla u^\eps \Big) + f(x) \qquad \mbox{ in }(0,\infty)\times\Omega,
\\[0.2 cm]
\nabla u^\eps(x_1,\pm3) \cdot {\bf n}(x_1,\pm3) & = g(x_1) \qquad \qquad\qquad \qquad\mbox{ on }(0,\infty)\times(-3,3),
\\[0.2 cm]
u^\eps(-3,x_2) & = u_{\rm cl}(3,x_2) \qquad \qquad\qquad \qquad\qquad \mbox{ for }x_2\in(-3,3),
\\[0.2 cm]
u^\eps(0,x) & = u^{\rm in}(x) \qquad \qquad\qquad \qquad\qquad \mbox{ in }\Omega
\end{aligned}
\end{equation}
\begin{thm}\label{thm:one-d:near-cloak}
Let $u^\eps(t,x)$ be the solution to the defect problem \eqref{eq:layer-cloak:u_defect} with high contrast coefficients \eqref{eq:layer:rho-small-inclusion}-\eqref{eq:layer:A-small-inclusion} and let $u_{\rm hom}(t,x)$ be the solution to the homogeneous conductivity problem \eqref{eq:layer-cloak:u_hom}. Then, there exists a time instant $\mathit{T} <\infty$ such that for all $t\ge \mathit{T}$ we have
\begin{align}\label{eq:thm:layer:H12-norm}
\left\Vert u^\eps(t,\cdot) - u_{\rm hom}(t,\cdot) \right\Vert_{\mathrm H^\frac12(\Gamma)} \lesssim \eps^2.
\end{align}
\end{thm}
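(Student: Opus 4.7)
The strategy I would pursue mirrors the three-step proof of Theorem \ref{thm:near-cloak}: interpose the steady states of the two evolutions, control the transient errors by spectral decay, and bound the gap between the two equilibria by an elliptic closeness estimate. Concretely, I would introduce $u_{\rm hom}^{\rm eq}$ and $u^\eps_{\rm eq}$ as the stationary solutions associated with \eqref{eq:layer-cloak:u_hom} and \eqref{eq:layer-cloak:u_defect} on $\Omega=(-3,3)^2$ with periodic-in-$x_1$ and Neumann-on-$\Gamma$ boundary conditions (plus the usual weighted-mean-zero normalization, with weight $\rho^\eps$ in the defect case). The triangle inequality
\begin{align*}
\left\Vert u^\eps(t,\cdot)-u_{\rm hom}(t,\cdot)\right\Vert_{\mathrm H^\frac12(\Gamma)}
\le{}& \left\Vert u^\eps(t,\cdot)-u^\eps_{\rm eq}\right\Vert_{\mathrm H^\frac12(\Gamma)}+\left\Vert u^\eps_{\rm eq}-u_{\rm hom}^{\rm eq}\right\Vert_{\mathrm H^\frac12(\Gamma)}\\
&+\left\Vert u_{\rm hom}^{\rm eq}-u_{\rm hom}(t,\cdot)\right\Vert_{\mathrm H^\frac12(\Gamma)}
\end{align*}
combined with the boundary trace inequality then reduces the matter to three separate bounds.

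For the transient pieces (first and third summands), I would rerun the spectral decomposition of Section \ref{sec:spec} with the Neumann Laplacian replaced by its periodic-in-$x_1$, Neumann-in-$x_2$ counterpart on the rectangle. Verbatim analogues of Propositions \ref{prop:v-H1-t-infty} and \ref{prop:v-eps-H1-t-infty} yield exponential decay rates $\gamma$ and $\gamma_\eps$, and the Schr\"odinger-operator reduction of Subsection \ref{ssec:schrodinger} applied to the layered pair \eqref{eq:layer:rho-small-inclusion}--\eqref{eq:layer:A-small-inclusion} shows that $\gamma_\eps$ stays bounded away from zero uniformly in $\eps$ (the induced potential $W^\eps$ is again $\mathcal O(1)$). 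Choosing $t$ large enough makes both transient pieces exponentially small, in particular much smaller than $\eps^2$.

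The main obstacle is the middle term, for which one needs a layered counterpart of the Friedman-Vogelius / Kohn-Vogelius polarization-tensor estimate,
\[
\left\Vert u^\eps_{\rm eq}-u_{\rm hom}^{\rm eq}\right\Vert_{\mathrm H^\frac12(\Gamma)}\lesssim \eps^2.
\]
The difference $w:=u^\eps_{\rm eq}-u_{\rm hom}^{\rm eq}$ satisfies a divergence-form elliptic equation with source $\nabla\cdot\bigl((A^\eps-{\rm Id})\nabla u_{\rm hom}^{\rm eq}\bigr)$ localised in the thin strip $\{\left\vert x_2\right\vert<\eps\}$, where the coefficients in \eqref{eq:layer:A-small-inclusion} are strongly anisotropic ($A^\eps_{11}\sim 1/\eps$, $A^\eps_{22}\sim \eps$). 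My plan is to (i) represent $w\restriction_\Gamma$ via a Green's-function integral against this strip-supported source, (ii) Taylor expand $u_{\rm hom}^{\rm eq}$ in the thin direction about $x_2=0$ (legitimate since $f$ is supported away from the strip and $u_{\rm hom}^{\rm eq}$ is accordingly smooth there), and (iii) use the precise anisotropic scaling dictated by the push-forward \eqref{eq:push-forward-layered-cloak-formulas} to show that the naive $\mathcal{O}(\eps)$ leading moment of the source cancels, leaving only an $\mathcal{O}(\eps^2)$ remainder. This cancellation --- the layered analogue of the vanishing zeroth-order polarization tensor for the radial cloak --- is the heart of the argument and where the bulk of the work will lie.
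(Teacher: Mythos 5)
Your high-level architecture mirrors the paper exactly: triangle inequality across the two stationary states, exponential-in-time control of both transients via spectral decay (and indeed you go further than the paper's sketch by explicitly invoking the Schr\"odinger-operator reduction of subsection~\ref{ssec:schrodinger} to argue $\gamma_\eps$ stays bounded away from zero), and a quantitative estimate on $\left\Vert u^\eps_{\rm eq}-u_{\rm hom}^{\rm eq}\right\Vert_{\mathrm H^{1/2}(\Gamma)}$. Where you part ways with the paper is the last step: the paper announces that one should \emph{adapt the energy approach} of \cite{Friedman_1989} (a variational argument), whereas you propose a potential-theoretic route via a Green's-function representation, Taylor expansion of $u_{\rm hom}^{\rm eq}$ in the thin direction, and a moment-cancellation identity. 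Neither the paper nor you actually carry out this last step, so the comparison here is between two unimplemented programmes rather than two completed proofs.

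There is, however, a concrete obstruction to the cancellation you posit in step~(iii), and you should confront it before committing to this route. Take $f$ and $g$ independent of $x_1$; the problem then reduces to a one-dimensional conduction problem in $x_2$. In $|x_2|<\eps$ the flux $C=A^\eps_{22}\partial_2 u^\eps_{\rm eq}$ is constant (the source vanishes there), and integrating $\partial_2 u=C/A^\eps_{22}$ across the strip gives a potential drop $C\int_{-1}^{1}\beta_{22}(z)^{-1}\,{\rm d}z$ for the defect problem, versus $2\eps C$ for the homogeneous one. For \emph{arbitrary} bounded, positive-definite $\beta$ these differ by $\mathcal{O}(1)$, not $\mathcal{O}(\eps^2)$, whenever the through-strip flux $C$ is nonzero --- and generic admissible $f,g$ do give $C\neq0$. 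The geometric reason the Friedman--Vogelius $|\omega_\eps|$-heuristic breaks down is that the strip disconnects $\Omega$ in the $x_2$ direction: currents cannot divert around it as they can around a ball, so the boundary perturbation is governed by the strip's through-resistance, which is $\mathcal{O}(1)$ regardless of $\eps$. Your claimed leading moment is therefore not $\mathcal{O}(\eps)$ (the $A^\eps_{11}-1\sim\eps^{-1}$ entry already contributes $\mathcal{O}(1)$ over the strip), and the zeroth moment does not cancel for general $\beta$. Either the theorem as stated requires additional hypotheses (vanishing through-flux, or $\beta$ constrained so that $\int_{-1}^{1}\beta_{22}^{-1}=2\eps$, which is exactly the push-forward-of-identity case used in the paper's numerics), or the exponent $\eps^2$ must be revised. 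Any proof --- by energy methods or Green's functions --- must make this hypothesis explicit, and your step~(iii) as written would not go through without it.
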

The proof is similar to the proof of Theorem \ref{thm:near-cloak}. More specifically, we show first that the solutions to the transient problems \eqref{eq:layer-cloak:u_defect} and \eqref{eq:layer-cloak:u_hom} converge exponentially fast to their corresponding equilibrium states. We can then adapt the energy approach in the proof of \cite{Friedman_1989} to show that the equilibrium states are $\eps^2$ close in the $\mathrm H^\frac12(\Gamma)$-norm. We note that our analysis of near-cloaking for thermal layered cloaks can be easily adapted to electrostatic and electromagnetic cases. It might also find interesting applications in Earth science for seismic tomography, in which case one could utilise results in \cite{Ammari_2013} to prove near cloaking results related to imaging the subsurface of the Earth with seismic waves produced by earthquakes.

\section{Numerical results}\label{sec:numerics}
This section deals with the numerical tests done in support of the theoretical results in the paper. The tests designed in the subsections to follow make some observations with regards to the near-cloaking scheme designed in the previous sections of this paper. The numerical simulations are done in one, two and three spatial dimensions. It seems natural to start with the one dimensional case, but as it turns out, see subsection \ref{ssec:layer-cloak}, the physical problem of interest for a one-dimensional (so-called layered) cloak requires a two-dimensonal computational domain, and thus we start with the 2D case. We refer the reader to \cite{Gralak_2016} for the precise physical setup and importance of the layered cloak described in the context of Maxwell's equations (this is easily translated into the language of conductivity equation). These numerical simulations were performed with the finite element software COMSOL MULTIPHYSICS.

We choose the spatial domain to be a square $\Omega:=(-3,3)^2$. We take the bulk source $f(x)$, the Neumann datum $g(x)$ and the initial datum $u^{\rm in}(x)$ to be smooth and such that ${\rm supp}\, f\subset \Omega\setminus B_2$, ${\rm supp}\, u^{\rm in}\subset \Omega\setminus B_2$. We further assume that
\[
\int_\Omega f(x)\, {\rm d}x = 0,
\qquad
\int_{\partial\Omega} g(x)\, {\rm d}\sigma(x) = 0,
\qquad
\int_\Omega u^{\rm in}(x)\, {\rm d}x = 0.
\]
This guarantees that the data is admissible in the sense of \eqref{eq:thm-near-cloak-compatible}-\eqref{eq:thm-near-cloak-compatible-initial}.
\subsection{Near-cloaking}\label{ssec:num:near-cloak}
The numerical experiments in this subsection analyse the sharpness of the near-cloaking result (Theorem \ref{thm:near-cloak}) in this paper. We solve the initial-boundary value problem for the unknown $u^\eps(t,x)$:
\begin{equation}\label{eq:num:defect-problem}
\begin{aligned}
\rho^\eps(x)\frac{\partial u^\eps}{\partial t} & = \nabla \cdot \Big( A^\eps(x) \nabla u^\eps \Big) + f(x) \qquad \mbox{ in }(0,\infty)\times\Omega,
\end{aligned}
\end{equation}
with the density-conductivity coefficients:
\begin{equation*}
\rho^\eps(x), \, A^\eps(x) =
\left\{
\begin{array}{ll}
1, \qquad \qquad \quad {\rm Id} & \quad \mbox{ for }x\in\Omega\setminus B_\eps,\\[0.2 cm]
\frac{1}{\eps^2}\eta\left(\frac{x}{\eps}\right), \qquad \beta\left(\frac{x}{\eps}\right) & \quad \mbox{ for }x\in B_\eps
\end{array}\right.
\end{equation*}
Note that in two-dimensions, there is no high contrast in the conductivity coefficient $A^\eps(x)$. There is, however, contrast in the density coefficient $\rho^\eps(x)$. The evolution \eqref{eq:num:defect-problem} is supplemented by the Neumann datum
\begin{equation*}
g(x_1,x_2) =
\left\{
\begin{array}{ll}
-3 & \quad \mbox{ for }x_1=\pm 3,\\[0.2 cm]
0 & \quad \mbox{ for }x_2=\pm 3
\end{array}\right.
\end{equation*}
and the initial datum
\begin{equation*}
u^{\rm in}(x_1,x_2) =
\left\{
\begin{array}{ll}
x_1 x_2 & \quad \mbox{ for }x\in\Omega\setminus B_2,\\[0.2 cm]
0 & \quad \mbox{ for }x\in B_\eps
\end{array}\right.
\end{equation*}
The bulk force in \eqref{eq:num:defect-problem} is taken to be
\begin{equation*}
f(x_1,x_2) =
\left\{
\begin{array}{ll}
\sqrt{x_1^2+x_2^2}\, \sin(x_1)\sin(x_2)-2 & \quad \mbox{ for }x\in\Omega\setminus B_2,\\[0.2 cm]
0 & \quad \mbox{ for }x\in B_\eps
\end{array}\right.
\end{equation*}
We next solve the initial-boundary value problem (Neumann) for $u_{\rm hom}(t,x)$ with the above data.
\begin{equation*}
\begin{aligned}
\partial_t u_{\rm hom} (t,x) & = \Delta u_{\rm hom}(t,x) + f(x) \qquad \mbox{ in }(0,\infty)\times\Omega.
\end{aligned}
\end{equation*}
Let us define
\begin{align}\label{eq:num:G-epsilon}
\mathcal{G}_\eps(t) := \frac{\left\Vert u^\eps(t,\cdot) - u_{\rm hom}(t,\cdot) \right\Vert_{\mathrm L^2(\partial\Omega)}}{\eps^d \left( \left\Vert u^{\rm in} \right\Vert_{\mathrm H^1} + \left\Vert f \right\Vert_{\mathrm L^2(\Omega)} + \left\Vert g \right\Vert_{\mathrm L^2(\partial\Omega)} \right)}
\end{align}
We compute the function $\mathcal{G}_\eps(t)$ defined by \eqref{eq:num:G-epsilon} as a function of time for various values of $\eps$, see Figure \ref{fig:num:G-epsilon}, and numerically observe that after $110s$, $\mathcal{G}_\eps(t)$ reaches an asymptote that tends towards a numerical value close to $8.6$ when $\eps$ gets smaller, in agreement with theoretical predictions of Theorem \ref{thm:near-cloak} for space dimension $d=2$. The simulations were performed using an adaptive mesh of the domain $\Omega$ consisting of $2\times10^5$ nodal elements (with at least $10^2$ elements in the small defect when $\eps=10^{-4}$), and a numerical solver based on the backward differential formula (BDF solver) with initial time step $10^{-5}s$, maximum time step $10^{-1}s$, minimum and maximum BDF orders of $1$ and $5$, respectively. Many test cases were ran for various values of $\beta$ (including anisotropic conductivity) and $\eta$ in the small inclusion, and we report some representative curves in Figure \ref{fig:num:G-epsilon}.

\begin{figure}[h!]
\includegraphics[width=10cm]{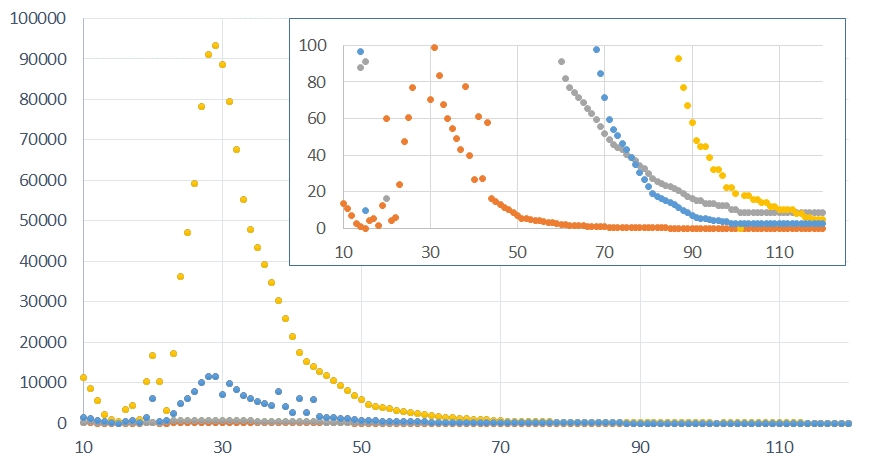}
\caption{Numerical results for ${\mathcal G}_\varepsilon(t)$ vs time $t$ (with a source outside $B_2$): Same parameters as in Figure \ref{fig:contour:2D} for a small defect of radius $\varepsilon=10^{-1}$ (orange), $\varepsilon=10^{-2}$ (grey), $\varepsilon=10^{-3}$ (blue), $\varepsilon=10^{-4}$ (yellow) and diffusivity $\beta(x/\varepsilon)=2$ Id and density $\varepsilon^{-2}\eta(x/\varepsilon)=2\varepsilon^{-2}$ in the inclusion $B_\eps$. Horizontal linear scale of time $t\in [0,110]$ s. Vertical scale of
$\varepsilon^{-2}\vert{\Vert u^\varepsilon(t,.)\Vert}_{L^2(\partial\Omega)}-{\Vert u_{hom}(t,.)\Vert}_{L^2(\partial\Omega)}\vert$ is a representation of ${\mathcal G}_\varepsilon(t)$. Insert shows a zoom-in.}
\label{fig:num:G-epsilon}
\end{figure}

\subsection{Contour plots}
Let us start with the contour plots for the layered cloak developed in subsection \ref{ssec:layer-cloak}, see Figure \ref{fig:layered-cloak}. More precisely, we compare solutions $u_{\rm hom}$ and $u_{\rm cl}$ to the evolution problems \eqref{eq:layer-cloak:u_hom} and \eqref{eq:layer-cloak:u_cloak} respectively, where periodic boundary conditions are imposed at the boundary $x_1=\pm3$ and homogeneous Neumann datum at the boundary $x_2=\pm3$. Here, we illustrate the layered cloak by numerically solving an equation for the unknown $u(t,x_1,x_2)$, but the conductivity and density only depend upon the $x_2$ variable. We take the source
\begin{equation*}
f(x_1,x_2) =
\left\{
\begin{array}{ll}
x_2\, \sin(x_2) & \quad \mbox{ for }x_2\in(-3,-2)\cup(2,3),\\[0.2 cm]
0 & \quad \mbox{ for }x_2\in (-2,2)
\end{array}\right.
\end{equation*}
and the initial datum is taken to be
\begin{equation*}
u^{\rm in}(x_1,x_2) =
\left\{
\begin{array}{ll}
x_2 & \quad \mbox{ for }x_2\in(-3,-2)\cup(2,3),\\[0.2 cm]
0 & \quad \mbox{ for }x_2\in (-2,2)
\end{array}\right.
\end{equation*}
Following the layered cloak construction in \eqref{eq:layer:A-cloak-choice}, we take the cloaking conductivity to be the push-forward of identity in the cloaking strip:
\[
A_{\rm cl}(x_1,x_2) = {\rm diag} \left(2-\eps, \frac{1}{2-\eps}\right) \qquad \mbox{ for }1< \vert x_2\vert <2.
\]
We report in Figure \ref{fig:layered-cloak} some numerical results that exemplify the high level of control of the heat flux with a layered cloak: in the upper panels (a), (b), (c), one can see snapshots at representative time steps (t=0, 1 and 4s) of a typical one-dimensional diffusion process in a homogeneous medium for a given source with a support outside $x_2\in(-2,2)$. When we compare the temperature field at initial time step in (a) with that when we replace homogeneous medium by a layered cloak in $1< \vert x_2\vert <2$ in (d), we note no difference. However, some noticeable differences are noted for the temperature field between the homogeneous medium and the cloak when comparing (b) with (d) and (c) with (e). The gradient of temperature field is dramatically decreased in the invisibility region $x_2\in(-1,1)$, leading to an almost uniform (but non zero) temperature field therein, and this is compensated by an enhanced gradient of temperature within the cloak in $1< \vert x_2\vert <2$. One notes that the increased flux in $1< \vert x_2\vert <2$ might be useful to improve efficiency of solar cells in photovoltaics.

Our next experiment is to compare the homogeneous solution and the cloaked solution where the cloaking coefficients are constructed using the push-forward maps as in \eqref{eq:rho-cloak-choice}-\eqref{eq:A-cloak-choice}. The data ($f$, $g$, $u^{\rm in}$) are chosen as in subsection \ref{ssec:num:near-cloak}. We report in Figure \ref{fig:contour:2D} some numerical computations performed in COMSOL that illustrate the strong similarity between the temperature fields in homogeneous (a-d), small defect (e-h) and cloaked (i-l) problems. Obviously, the fields are identical outside $B_2$ at initial time step, then they differ most outside $B_2$ at small time step t=1s, see (b),(f),(j), and become more and more similar with increasing time steps, see (c), (g), (k) and (d), (h), (l). These qualitative observations are consistent with the near-cloaking result, Theorem \ref{thm:near-cloak}, of this paper.

We have also performed a similar experiment in three dimensions, see Figure \ref{fig:contour:3D}. Refer to the caption in Figure \ref{fig:contour:3D} for the parameters considered in the three dimensional problem. Note that for these 3D computations, we mesh the cubical domain with $40,000$ nodal elements, we take time steps of $0.1s$ and use BDF solver with initial time step $0.01s$, maximum time step $0.1s$, minimum and maximum BDF orders of $1$ and $3$, respectively. We use a desktop with 32 Gb of RAM memory and a computation run takes around 1 hour for a time interval between $0.1$ and $10s$. We are not able to study long time behaviours, nor solve the high contrast small defect problem in 3D, as this would require more computational resources. Nevertheless, our 3D computations suggest that there is a strong similarity between the temperature fields in homogeneous and cloaked problems, outside $B_2$. Note also that we consider a source not vanishing inside $B_2$, which motivates further theoretical analysis for sources with a support in the overall domain $\Omega$.  

\begin{figure}[h!]
\includegraphics[width=10cm]{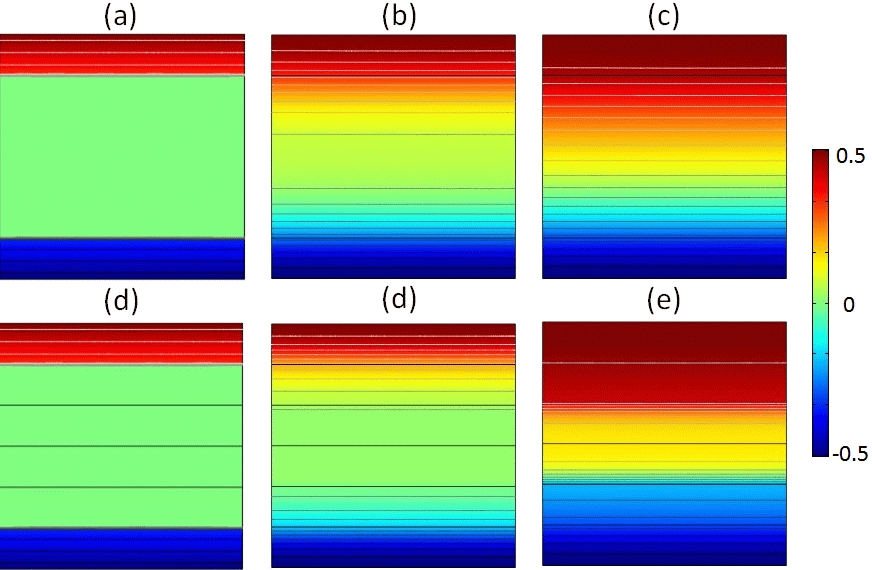}
\caption{Contour plots of a layered cloak (with a source outside $x_2\in(-2,2)$): $f(x)=x_2\sin(x_2)$ for $x\in\{(x_1,x_2):x_2\in(-3,-2)\cup(2,3)\}$ and $f(x)=0$ for $x_2\in(-2,2)$.
Upper panel: Plots of $u$ for data $u^{in}(x)=x_2$ and $g(x)=0$ for $x_2=\pm 3$ and such that $u(-3,x_2)=u(3,x_2)$ for a homogeneous medium with diffusivity $A=1$ at time steps $t=0s$ (a), $1s$ (b) and $4s$ (c). Lower panel: Same for the medium with diffusivity $A=1$ outside $x_2\in(-2,2)$ and a layered cloak inside $x_2\in(-2,2)$ with diffusivity $A(x_2)={\rm diag}\left(\mathbb{A}_{11}(x_2), \frac{1}{\mathbb{A}_{11}(x_2)} \right)$ and density $\rho(x_2)=\mathbb{A}_{11}(x_2)$ with $\mathbb{A}_{11}=\varepsilon$ in $x_2\in(-1,1)$ and $\mathbb{A}_{11}=2-\varepsilon$ in $1<\vert x_2\vert <2$ at time steps $t=0s$ (d), $1s$ (e) and $4s$ (f).}
\label{fig:layered-cloak}
\end{figure}

\begin{figure}[h!]
\includegraphics[width=10cm]{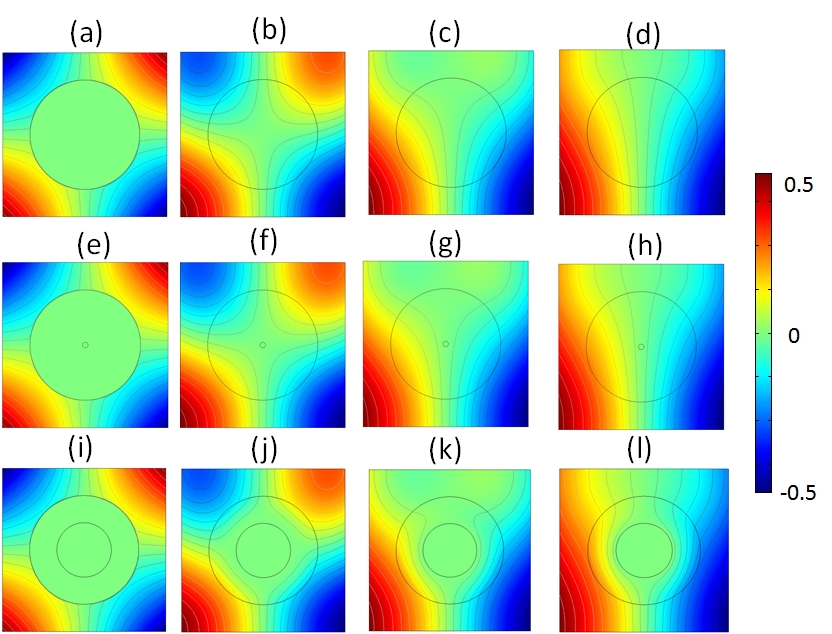}
\caption{Contour plots of a 2D cloak (with a source outside $B_2$): $f(x)=\sqrt{x_1^2+x_2^2}\sin(x_1)\sin(x_2)-2$ for $x\in\Omega\setminus B_2$ and $f(x)=0$ for $x\in B_2$.
Upper panel: Plots of $u$ for data $u^{in}(x_1,x_2)=x_1x_2$ and $g(x_1,x_2)=-3$ for $x_1=\pm 3$ and $0$ for $x_2=\pm 3$ and diffusivity $A=1$ at time steps $t=0s$, $1s$, $4s$ $10s$ and $20s$. Middle panel: same for a small defect of radius $\varepsilon=10^{-1}$, density $\rho^\varepsilon=2\varepsilon^{-2}$ and diffusivity ${A}^\varepsilon=2$ Id. Lower panel: Same for a cloak.}
\label{fig:contour:2D}
\end{figure}

\begin{figure}[h!]
\includegraphics[width=10cm]{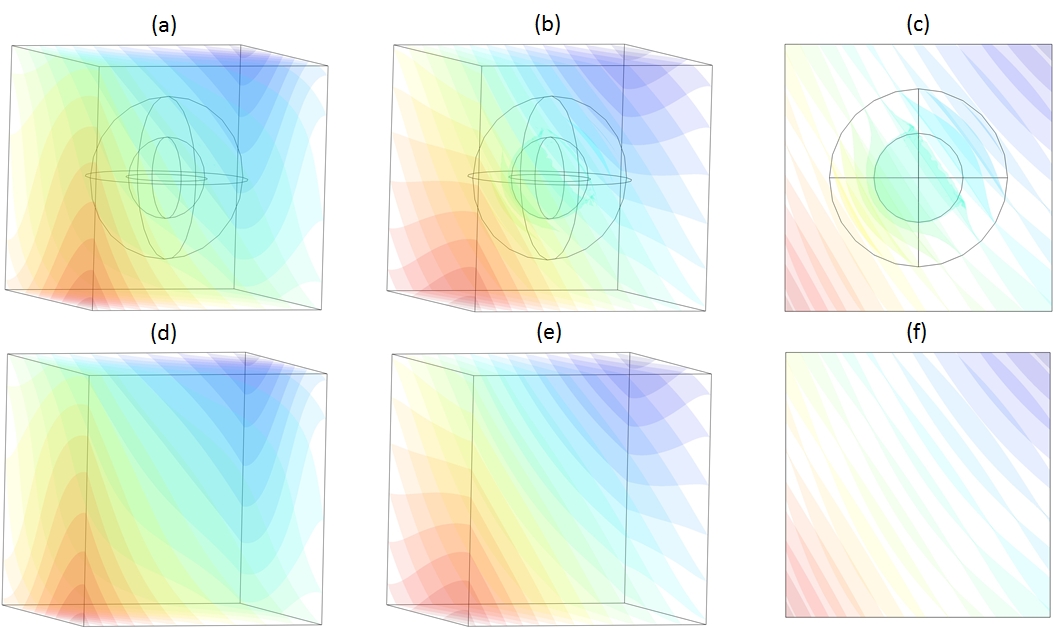}
\caption{Numerical results for isosurface plots (source outside and inside $B_2$): $f_{cl}(x)=\sqrt{x_1^2+x_2^2+x_3^2}\sin(x_1)\sin(x_2)\sin(x_3)$ for $x\in\Omega$.
Upper panel: Plots of $u$ for data $u^{in}(x)=x_3$ and $g(x)=-3$ for $x_1=\pm 3$, and $x_2=\pm 3$ and $0$ for $x_3=\pm 3$ and diffusivity $A=1$ outside a cloak defined as in \eqref{eq:A-cloak-choice} at time steps $t=0.7s$ (a) and $7s$ (b,c). Lower panel: Same for a homogeneous medium with diffusivity $A=1$ at time steps $t=0.7s$ (d) and $7s$ (e,f). Note that (c) and (f) are slices taken in $x_1x_2$-plane at $x_3=0$.}
\label{fig:contour:3D}
\end{figure}

\subsection{Cloaking coefficients}
The cloaking coefficients $\rho_{\rm cl}$ and $A_{\rm cl}$ in the annulus $B_2\setminus B_1$ play all the essential roles in thermal cloaking phenomena. So, it is important in practice -- to gain some physical intuition and start engineering and manufacturing processes of a meta-material cloak -- to analyse the coefficients defined by \eqref{eq:rho-cloak-choice}-\eqref{eq:A-cloak-choice}. For readers' convenience we recall them below (only for the part $\Omega\setminus B_1$ as coefficients inside $B_1$ can be arbitrary)
\begin{equation*}
\rho_{\rm cl}(y) =
\left\{ 
\begin{array}{ll}
1 & \quad \mbox{ for }y\in\Omega\setminus B_2,\\[0.2 cm]
\frac{1}{{\rm det}(D\mathcal{F}_\eps)(x)}\Big|_{x=\mathcal{F}_\eps^{-1}(y)} & \quad \mbox{ for }y\in B_2\setminus B_1
\end{array}\right.
\end{equation*}
\begin{equation*}
A_{\rm cl}(y) =
\left\{ 
\begin{array}{ll}
{\rm Id} & \quad \mbox{ for }y\in\Omega\setminus B_2,\\[0.2 cm]
\frac{D\mathcal{F}_\eps(x) D\mathcal{F}_\eps^\top(x)}{{\rm det}(D\mathcal{F}_\eps)(x)}\Big|_{x=\mathcal{F}_\eps^{-1}(y)} & \quad \mbox{ for }y\in B_2\setminus B_1
\end{array}\right.
\end{equation*}
Remark that both the coefficients $\rho_{\rm cl}$ and $A_{\rm cl}$ depend on the regularising parameter $\eps$ via the Lipschitz map $\mathcal{F}_\eps$. In this numerical test, we plot the cloaking coefficients given in terms of the polar coordinates. Consider the Lipschitz map $\mathcal{F}_\eps:\Omega \mapsto \Omega$
\[
x := (x_1,x_2) \mapsto \left( \mathcal{F}^{(1)}_\eps(x), \mathcal{F}^{(2)}_\eps(x) \right) =: (y_1, y_2) = y
\] 
If the cartesian coordinates $(x_1,x_2)$ were to be expressed in terms of the polar coordinates as $(r\cos\theta, r\sin\theta)$ then the new coordinates $(\mathcal{F}^{(1)}_\eps(x), \mathcal{F}^{(2)}_\eps(x))$ become $(r'\cos\theta, r'\sin\theta)$ with
\begin{equation}\label{eq:r-prime-defn}
r' :=
\left\{
\begin{array}{cl}
r & \mbox{ for } r \ge 2
\\[0.2 cm]
\frac{2-2\eps}{2-\eps} + \frac{r}{2-\eps} & \mbox{ for } \eps < r < 2 
\\[0.2 cm]
\frac{r}{\eps} & \mbox{ for } r \le \eps
\end{array}\right.
\end{equation}
Bear in mind that only the radial coordinate $r$ gets transformed by $\mathcal{F}_\eps$ and the angular coordinate $\theta$ remains unchanged. Reformulating the push-forward maps in terms of the polar coordinates yield the following expressions for the cloaking coefficients in the annulus:
\begin{equation}\label{eq:num:push-forward-polar}
\left.
\begin{aligned}
A_{\rm cl}^{\rm 2D} = \mathcal{F}^\ast\, {\rm Id} & = \mathtt{R}(\theta) {\rm diag}\left(\mathbb{A}_{11}(r'), \frac{1}{\mathbb{A}_{11}(r')} \right) \left[ \mathtt{R}(\theta) \right]^\top
\\
\rho_{\rm cl}^{\rm 2D} = \mathcal{F}^\ast 1 & = \frac{r' -1}{r'}(2-\eps)^2 + \frac{\eps}{r'}(2-\eps)
\end{aligned}
\right\}
\quad \mbox{ for }r'\in(1,2)
\end{equation}
where $\mathbb{A}_{11}(r')$ is given by
\begin{align}\label{eq:mathbb-A11}
\mathbb{A}_{11}(r') = \frac{r' -1}{r'} + \frac{\eps}{r'(2-\eps)}
\qquad \mbox{ for }r'\in[1,2]
\end{align}
and the rotation matrix
\[
\mathtt{R}(\theta) = 
\left(
\begin{matrix}
\cos\theta & -\sin\theta
\\
\sin\theta & \cos\theta
\end{matrix}
\right).
\]
We plot in Figure \ref{fig:num:polar-cloaks} the matrix entry $\mathbb{A}_{11}(r')$ given above and the push-forward density $\rho^{\rm 2D}_{\rm cl}$ given in \eqref{eq:num:push-forward-polar}. We observe that when $\varepsilon$ gets smaller, the radial conductivity and density take values very close to zero near the inner boundary of the cloak, which is unachievable in practice (bear in mind that the azimuthal conductivy being the inverse of the radial conductivity, the conductivity matrix becomes extremely anisotropic). Therefore, manufacturing a meta-material cloak would require a small enough value of  epsilon so that homogenization techniques can be applied to approximate the anisotropic conductivity with concentric layers of isotropic phases, e.g. as was done for the 2D meta-material cloak manufactured at the Karlsruher Institut f\"ur Technologie \cite{Schittny_2013}.
\begin{figure}[h!]
\includegraphics[width=12cm]{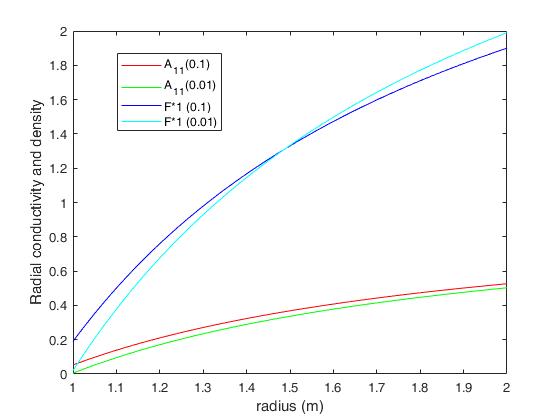}
\caption{Plots of $\rho^{\rm 2D}_{\rm cl}(r')$ and $\mathbb{A}_{11}(r')$ for $r'\in[1,2]$ and $\varepsilon=10^{-1}, 10^{-2}$.}
\label{fig:num:polar-cloaks}
\end{figure}

In three spatial dimensions, we can recast the cloaking coefficients in the spherical coordinates $(r,\theta,\varphi)$. As in the cylindrical coordinate setting, only the radial variable gets modified by the Kohn's transformation $\mathcal{F}^\eps$ and the variables $\theta$, $\varphi$ remain unchanged. The transformed radial coordinate $r'$ is given by \eqref{eq:r-prime-defn}. The push-forward maps of interest for cloaking are
\begin{equation}\label{eq:num:push-forward-spherical}
\left.
\begin{aligned}
A_{\rm cl}^{\rm 3D} = \mathcal{F}^\ast\, {\rm Id} & = \mathtt{R}(\theta) \mathtt{M}(\varphi) {\rm diag}\Big(\mathbb{B}(r'), 2-\eps, 2-\eps \Big) \mathtt{M}(\varphi) \left[ \mathtt{R}(\theta) \right]^\top
\\
\rho_{\rm cl}^{\rm 3D} = \mathcal{F}^\ast 1 & = \mathbb{B}(r') := (2-\eps) \left( 2-\eps - \frac{(2-2\eps)}{r'}\right)^2
\end{aligned}
\right\}
\quad \mbox{ for }r'\in(1,2)
\end{equation}
with the rotation matrix $\mathtt{R}(\theta)$ in three dimensions and the matrix $\mathtt{M}(\varphi)$ given by
\[
\mathtt{R}(\theta) = 
\left(
\begin{matrix}
\cos\theta & -\sin\theta & 0
\\
\sin\theta & \cos\theta & 0
\\
0 & 0 & 1
\end{matrix}
\right)
\quad
\mathtt{M}(\varphi) = 
\left(
\begin{matrix}
\sin\varphi & 0 & \cos\varphi
\\
0 & 1 & 0
\\
\cos\varphi & 0 & -\sin\varphi
\end{matrix}
\right)
\]
Note that the matrix entry $\mathbb{B}(r')$ in the push-forward conductivity coincides with the push-forward density $\rho_{\rm cl}^{\rm 3D}$. We plot in Figure \ref{fig:num:spherical-cloaks} the radial conductivity and density in the cloaking annulus given in \eqref{eq:num:push-forward-spherical}. One should recall that the 3D spherical cloak only has a varying radial conductivity, the other two polar and azimuthal diagonal entries of the conductivity matrix being constant (i.e. independent of the radial position). Besides, the radial conductivity has the same value as the density within the cloak. All this makes the 3D spherical cloak easier to approximate with homogenization techniques. However, no thermal cloak has been manufactured and experimentally characterised thus far, perhaps due to current limitations in 3D manufacturing techniques; a possible route towards construction of a spherical cloak is 3D printing. Similar computations in spherical coordinates, but for Pendry's singular transformation are found in \cite{Nicolet_2008}, where the matrix $\mathtt{M}$ was introduced to facilitate implementation of perfectly matched layers, cloaks and other transformation based electromagnetic media with spherical symmetry in finite element packages, see also \cite{Nicolet_1994} that predates the field of transformational optics.

\begin{figure}[h!]
\includegraphics[width=12cm]{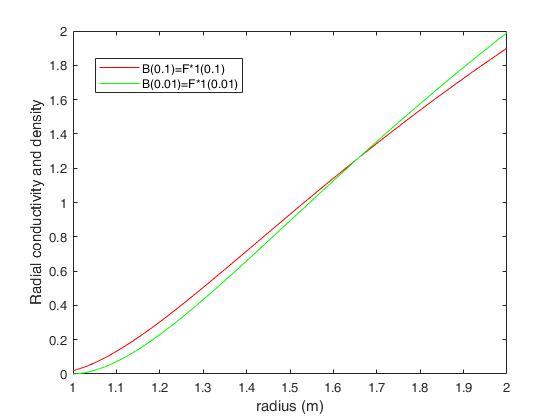}
\caption{Plots of $\rho^{\rm 3D}_{\rm cl}(r')=\mathbb{B}(r')$ for $r'\in[1,2]$ and $\varepsilon=10^{-1}, 10^{-2}$.}
\label{fig:num:spherical-cloaks}
\end{figure}

\subsection{Spectral problems} Here we perform some numerical tests in support of the spectral result (Proposition \ref{prop:first-non-zero-eigenvalue}) proved in this paper. The two Neumann spectral problems that we study are
\[
-\Delta \phi = \mu\, \phi \quad \mbox{ in }\Omega:=(-3,3)^d,
\quad
\nabla\phi\cdot{\bf n} = 0 \quad \mbox{ on }\partial\Omega.
\]
\[
-{\rm div}\Big(A^\eps(x) \nabla \phi^\eps \Big) = \mu\, \rho^\eps(x)\, \phi^\eps \quad \mbox{ in }\Omega:=(-3,3)^d,
\quad
\nabla\phi^\eps\cdot{\bf n} = 0 \quad \mbox{ on }\partial\Omega.
\]
The coefficients are of high contrast and take the form
\begin{equation*}
\rho^\eps(x), \, A^\eps(x) =
\left\{
\begin{array}{ll}
1, \qquad \quad {\rm Id} & \quad \mbox{ for }x\in\Omega\setminus B_\eps,\\[0.2 cm]
\frac{1}{\eps^d}, \qquad \frac{1}{\eps^{d-2}}{\rm Id} & \quad \mbox{ for }x\in B_\eps
\end{array}\right.
\end{equation*}
The first non-zero eigenvalue for the Neumann Laplacian is $\mu_{2}={(\pi/6)}^{d}$. We illustrate the result of Proposition \ref{prop:first-non-zero-eigenvalue} by showing that the first non-zero eigenvalue $\mu^\eps_2$ for the defect spectral problem is $\eps^d$-close to $\mu_2$ for various value of $\eps$ and in dimensions one, two and three. The results are tabulated in Table \ref{table:eigenprob}. The associated eigenfields are also plotted in Figure \ref{fig:eigenfields}. Note that the spectral problems in 1D, 2D and 3D are solved with direct UMFPACK and PARDISO solvers using adaptive meshes with 250, 150 and 50 thousand elements, respectively. We made sure that there are at least $10^2$ elements in the small defect for every spectral problem solved.

\begin{table}
\begin{tabular}{|l|l|l|l|c|c|c|c|c|c|r|r|r|r|}
   \hline
   &  \multicolumn{4}{|l|}{Numerical illustration of proposition 5}  &\\
   \hline
  &  dim. $d$   & $\mu_{2}={(\pi/6)}^{d}$   &  $\mu_{2}^\varepsilon$
  & $\mid\mu_{2}-\mu_{2}^\varepsilon\mid$ & Parameter $\varepsilon$\\
   \hline 
 & 1  & 0.52359877559  & 0.5342584732    &    0.0106596976                             &$\varepsilon =10 ^{-1}$
 \\
Numerical
 & 1 & 0.52359877559 & 0.5327534635     &  0.0091546879                & $\varepsilon =10 ^{-2}$  
\\
 validation of
  & 1  & 0.52359877559  & 0.5245077454     &  0.0009896980                  &  $\varepsilon =10 ^{-3}$
 \\
 & 1  & 0.52359877559  & 0.5236795221   & 0.0000807465    &  $\varepsilon =10 ^{-4}$
 \\
  $\mid\mu_{2}-\mu_{2}^\varepsilon\mid \leq \varepsilon^d$
 & 1   &  0.52359877559  & 0.5236081655    & 0.0000093899   &  $\varepsilon =10 ^{-5}$
\\
 & 1  & 0.52359877559 &  0.52359897453    & 0.0000001989 &  $\varepsilon =10 ^{-6}$
 \\
 & 1  & 0.52359877559  & 0.52359888455     & 0.0000001089  &  $\varepsilon =10 ^{-7}$ 
\\
   \hline
 & 2  & 0.27415567781 & 0.2741626732 & 0.0000069954 & $\varepsilon =10 ^{-1}$
 \\
 & 2 & 0.27415567781 & 0.2741546789 &  0.0000009989 & $\varepsilon =10 ^{-2}$  
\\
  & 2  & 0.27415567781  &  0.2741556795    & 0.0000000017 &  $\varepsilon =10 ^{-3}$
  \\
 \hline
 & 3  & 0.14354757722 & 0.1437347845    & 0.0001872072                                 &$\varepsilon =10 ^{-1}$
 \\
 \hline
\end{tabular}
\label{table:eigenprob}
\caption{ Numerical estimate  of the difference $\mid\mu_{2}-\mu_{2}^\varepsilon\mid$, versus the parameter $\varepsilon =10^{-m}$ with $m=1,\cdots, 7$ for dimension $d=1$, with $m=1,\cdots, 3$ for $d=2$ and with $m=1$ for $d=3$. Same source, Neumann data, diffusivity and density parameters as in Figure \ref{fig:eigenfields}.}
\end{table}

\begin{figure}[h!]
\includegraphics[width=10cm]{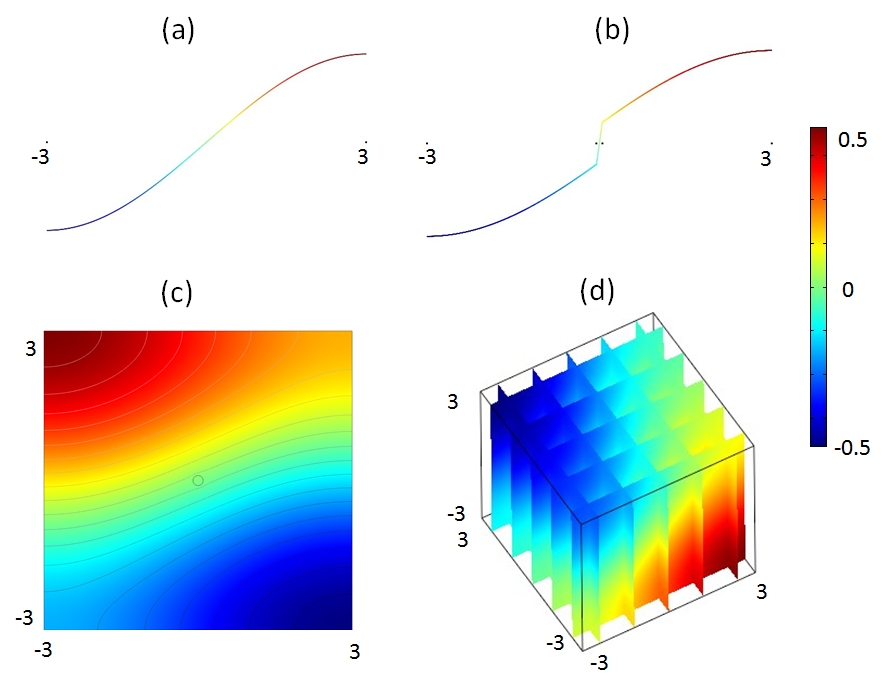}
\caption{Numerical results for contour plots of eigenfields $\phi$ and $\phi^\varepsilon$ associated with
$\mu_{2}$ (a) and $\mu_{2}^{\varepsilon}$ (b-d), in dimension $1$ (a,b), $2$ (c) and $3$ (d).
}
\label{fig:eigenfields}
\end{figure}

\section{Concluding remarks}
This work addressed the question of near-cloaking in the time-dependent heat conduction problem. The main inspiration is derived from the work of Kohn and co-authors \cite{Kohn_2008} which quantified near-cloaking in terms of certain boundary measurements. Hence the main result of this paper (see Theorem \ref{thm:near-cloak}) asserts that the difference between the solution to the cloak problem \eqref{eq:intro-u_cloak} and that to the homogeneous problem \eqref{eq:intro-u_hom} when measured in the $\mathrm H^\frac12$-norm on the boundary can be made as small as one wishes by fine-tuning certain regularisation parameter. To the best of our knowledge, this is the first work to consider near-cloaking strategies to address time-dependent heat conduction problem. This work supports the idea of thermal cloaking albeit with the price of having to wait for certain time to see the effect of cloaking. We also illustrate our theoretical results by some numerical simulations. We leave the study of fine properties of the thermal cloak problem for future investigations:
\begin{itemize}
\item Behaviour of the temperature field inside the cloaked region.
\item Designing certain multi-scale structures (\`a la reiterated homogenization) to achieve effective properties close to the characteristics of $\rho_{\rm cl}$ and $A_{\rm cl}$.
\item Study thermal cloaking for time-harmonic sources.
\end{itemize}

\noindent{\bf Acknowledgments.} 
The authors would like to thank Yves Capdeboscq for helpful discussions regarding the Calder\'on inverse problem and for bringing to our attention the work of Kohn and co-authors \cite{Kohn_2008}.
H.H. and R.V.C. acknowledge the support of the EPSRC programme grant ``Mathematical fundamentals of Metamaterials for multiscale Physics and Mechanics'' (EP/L024926/1).
R.V.C. also thanks the Leverhulme Trust for their support. 
S.G. acknowledges support from EPSRC as a named collaborator on grant EP/L024926/1. 
G.P. acknowledges support from the EPSRC via grants EP/L025159/1, EP/L020564/1, EP/L024926/1, EP/P031587/1.


\bibliography{references}

\end{document}